\newcounter{mnotecount}[section]
\newcommand{\rmnote}[1]{}
\DeclareFontFamily{U}{mathb}{\hyphenchar\font45}
\DeclareFontShape{U}{mathb}{m}{n}{
      <5> <6> <7> <8> <9> <10> gen * mathb
      <10.95> mathb10 <12> <14.4> <17.28> <20.74> <24.88> mathb12
      }{}
\DeclareSymbolFont{mathb}{U}{mathb}{m}{n}
\let\dot\relax
\DeclareMathAccent{\dot}{0}{mathb}{"39}
\let\ddot\relax
\DeclareMathAccent{\ddot}{0}{mathb}{"3A}
\let\dddot\relax
\DeclareMathAccent{\dddot}{0}{mathb}{"3B}
\let\ddddot\relax
\DeclareMathAccent{\ddddot}{0}{mathb}{"3C}
\theoremstyle{plain}
\newtheorem*{theorem*}{Theorem}
\newtheorem{theorem}{Theorem}
\newtheorem*{lemma*}{Lemma}
\newtheorem{lemma}[theorem]{Lemma}
\newtheorem*{proposition*}{Proposition}
\newtheorem{proposition}[theorem]{Proposition}
\newtheorem*{corollary*}{Corollary}
\newtheorem{corollary}[theorem]{Corollary}
\newtheorem*{claim*}{Claim}
\newtheorem*{conjecture*}{Conjecture}
\newtheorem*{question*}{Question}
\theoremstyle{definition}
\newtheorem*{definition*}{Definition}
\newtheorem*{example*}{Example}
\newtheorem*{algorithm*}{Algorithm}
\newtheorem*{remark*}{Remark}
\newtheorem*{remarks*}{Remarks}
\newtheorem*{convention*}{Convention}
\newcommand{\al}{\alpha}
\newcommand{\be}{\beta}
\newcommand{\et}{\eta}
\newcommand{\ka}{\kappa}
\newcommand{\la}{\lambda}
\newcommand{\rh}{\rho}
\newcommand{\si}{\sigma}
\newcommand{\vh}{\varphi}
\newcommand{\om}{\omega}
\newcommand{\Ga}{\Gamma}
\newcommand{\N}{\mathbb{N}}
\newcommand{\R}{\mathbb{R}}
\newcommand{\cB}{\mathcal{B}}
\newcommand{\cE}{\mathcal{E}}
\newcommand{\fM}{\mathfrak{M}}
\newcommand{\fN}{\mathfrak{N}}
\newcommand{\fS}{\mathfrak{S}}
\newcommand{\fV}{\mathfrak{V}}
\newcommand{\fW}{\mathfrak{W}}
\newcommand{\p}{\partial}
\renewcommand{\o}{\circ}
\newcommand{\on}{\operatorname}
\newcommand{\sr}[1]%
{\ifmmode{}^\dagger\else${}^\dagger$\fi\ifvmode
\vbox to 0pt{\vss
 \hbox to 0pt{\hskip\hsize\hskip1em
 \vbox{\hsize3cm\raggedright\pretolerance10000
 \noindent #1\hfill}\hss}\vss}\else
 \vadjust{\vbox to0pt{\vss%
 \hbox to 0pt{\hskip\hsize\hskip1em%
 \vbox{\hsize3cm\raggedright\pretolerance10000%
 \noindent #1\hfill}\hss}\vss}}\fi%
}
\newcommand{\A}{\;\forall}
\newcommand{\E}{\;\exists}
\newcommand{\ol}{\overline}
\newcommand{\ul}{\underline}
\title[On the extension of Whitney ultrajets, II]
{On the extension of Whitney ultrajets, II}
\author[A.~Rainer]{Armin Rainer}
\address{A.~Rainer: 
Fakult\"at f\"ur Mathematik, Universit\"at Wien, 
Oskar-Morgenstern-Platz~1, A-1090 Wien, Austria}
\email{armin.rainer@univie.ac.at}
\author[G.~Schindl]{Gerhard Schindl}
\address{G.~Schindl: Departamento de \'Algebra, An\'alisis Matem\'atico, Geometr\'{\i}a y Topolog\'{\i}a,
Facultad de Ciencias,
Universidad de Valladolid,
Paseo de Bel\'en 7,
47011 Valladolid, Spain}
\email{gerhard.schindl@univie.ac.at}
\begin{document}

\begin{abstract}
We characterize the validity of the Whitney extension theorem in the ultradifferentiable Roumieu setting 
with controlled loss of regularity. 
Specifically, we show that in the main Theorem 1.3 of \cite{RainerSchindl17} condition (1.3) can be dropped. 
Moreover,
we 
clarify some questions that remained open in \cite{RainerSchindl17}.  
\end{abstract}

\thanks{The first author was supported by the Austrian Science Fund (FWF) Project P 26735-N25.
The second author was supported by FWF-Project J 3948-N35; 
within this project, he is an external researcher at the Universidad de Valladolid (Spain) for the 
period October 2016 -- September 2018}
\keywords{Whitney extension theorem in the ultradifferentiable setting, Roumieu type classes, controlled loss of regularity, 
properties of weight functions}
\subjclass[2010]{26E10, 30D60, 46E10, 58C25}
\date{\today}

\maketitle

\section{Introduction}

The main goal of this paper is to prove:

\begin{theorem} \label{mainadd}
Let $\om$ be a non-quasianalytic concave weight function. 
Let $\si$ be a weight function satisfying $\si(t) = o(t)$ as $t \to \infty$. 
Then the following conditions are equivalent:
\begin{enumerate}
    \item[(i)] For every compact $E \subseteq \R^n$ we have 
        $j^\infty_E(\cB^{\{\om\}}(\R^n)) \supseteq \cB^{\{\si\}}(E)$, 
        where $j_E^\infty$ assigns to each $f \in C^\infty(\R^n)$ its infinite jet $(f^{(\al)}|_E)_{\al \in \N^n}$ on $E$. 
    \item[(ii)] There is $C>0$ such that $\int_{1}^\infty \frac{\om(tu)}{u^2}\,du \le C\si(t) + C$ for all $t>0$.
  \end{enumerate}
\end{theorem}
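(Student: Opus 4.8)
The plan is to derive \Cref{mainadd} from Theorem~1.3 of \cite{RainerSchindl17}, which already establishes the equivalence of (i) and (ii) under one \emph{extra} hypothesis, the condition (1.3); the whole point is thus to show that, once $\om$ is concave and $\si(t)=o(t)$, condition~(1.3) becomes superfluous. As a preliminary I would rewrite (ii): the substitution $s=tu$ turns $\int_1^\infty\om(tu)u^{-2}\,du$ into $t\int_t^\infty\om(s)s^{-2}\,ds=:\Om(t)$, so that (ii) reads $\Om\le C\si+C$. Non-quasianalyticity makes $\Om$ finite, and concavity of $\om$ forces $\om(t)=o(t)$ (otherwise $\om(t)/t$ would tend to a positive limit and $\int_1^\infty\om(t)t^{-2}\,dt$ would diverge). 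Integrating by parts gives $\Om(t)=\om(t)+t\int_t^\infty\om'(s)s^{-1}\,ds$, whence $\om\le\Om$; and a second differentiation yields $\Om''(t)=-\om'(t)/t\le0$, so $\Om$ is itself concave and, since $\Om'\ge0$, increasing. These identities are the structural input that concavity contributes.

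With (ii) rephrased as $\Om\le C\si+C$, the argument splits into the two implications of \Cref{mainadd}. The necessity (i)~$\Rightarrow$~(ii) I would take essentially as in \cite{RainerSchindl17}: an extension valid for every compact $E$ is provided by a continuous linear operator (by a closed graph / Baire argument on the natural projective- and inductive-limit topologies), and testing this operator against Whitney jets concentrated near a point reproduces exactly the integral bound $\Om\lesssim\si$; this step does not use~(1.3). For the sufficiency (ii)~$\Rightarrow$~(i) the old proof did invoke~(1.3), so the new content is to show that here (1.3) follows from (ii). Granting that, Theorem~1.3 of \cite{RainerSchindl17} applies and delivers (i).

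To obtain (ii)~$\Rightarrow$~(1.3) I would exploit both standing hypotheses. From $\om\le\Om\le C\si+C$ one first reads off the pointwise domination $\om\lesssim\si$. The additional force of (1.3) is a comparison of the dilation behaviour of the two weights, and this is where concavity and $\si(t)=o(t)$ enter: concavity gives the subadditivity $\om(2t)\le2\om(t)$ together with the two-sided control $\Om(t)\le\Om(2t)\le2\Om(t)$ afforded by the concavity and monotonicity of $\Om$, while $\si(t)=o(t)$ excludes precisely the near-linear regime of $\si$ that is the only case left uncovered by the older theorem. Feeding $\Om\le C\si+C$ in on one side and $\si=o(t)$ in on the other, and propagating the good behaviour of the concave functions $\om,\Om$ through the inequality, should pin down~(1.3).

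I expect this last step to be the crux. The difficulty is that $\si$ carries, a priori, no regularity of its own, so (1.3) must be inferred by transporting the dilation estimates of the concave functions $\om$ and $\Om$ across the one-sided inequality $\Om\le C\si+C$; it is exactly the hypothesis $\si(t)=o(t)$ that rules out the borderline case in which this transport fails. A smaller, purely technical point is to confirm that the transform $\om\mapsto\Om$ keeps one inside the class of admissible weights (monotonicity, the logarithmic lower bound, and doubling), which the concavity of $\Om$ established above makes routine.
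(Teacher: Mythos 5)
Your overall strategy --- reduce to Theorem 1.3 of \cite{RainerSchindl17} by showing that, under the standing hypotheses, its extra assumption \eqref{intro:good} is automatically satisfied --- is not the route the paper takes, and the step you yourself call ``the crux'' is a genuine gap, not a technicality. Condition \eqref{intro:good} is a statement about the \emph{associated} weight matrix $\fS=\{S^\xi\}_{\xi>0}$ of $\si$, and after your (correct) reduction to the case where $\si$ equals the concave weight $\ka$ of \eqref{kappa}, what you would need is precisely that every concave weight function is ``good''. That is Question 3.21 of \cite{RainerSchindl17}, which the present paper explicitly does \emph{not} resolve (Section 3 gives only partial answers). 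What can actually be proved (\Cref{prop:strongmatrix}) is that $\fS$ is \emph{equivalent} to a matrix $\ul\fS$ of strongly log-convex sequences, not that $\fS$ itself satisfies \eqref{intro:good}; and transporting dilation estimates of the concave functions $\om$ and $\ka$ across the one-sided inequality $\ka\le C\si+C$ cannot produce \eqref{intro:good}, because that condition is sensitive to the term-by-term behaviour of the quotients $S^\xi_j/S^\xi_{j-1}$, which is not determined by the equivalence class of $\si$.

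Nor can the plan be repaired by simply substituting the equivalent strongly log-convex matrix $\ul\fS$ and then citing Theorem 1.3 of \cite{RainerSchindl17} as a black box: the proof of that theorem is tied to the associated matrix, since its Lemma 3.4 and Proposition 3.7 also use that for each $S\in\fS$ there is $T\in\fS$ with $S_{2k}/S_{2k-1}\lesssim T_k/T_{k-1}$, a property known for $\fS$ but, as the paper notes right after \Cref{prop:strongmatrix}, not clear for $\ul\fS$. This is exactly why the paper introduces the interleaved matrix $\fV$ with $v^\xi_k=\min_{0\le j\le k}\ul s^{2\xi}_j\,\ul s^{2\xi}_{k-j}$, for which $\Ga_{v^\xi}=2\Ga_{\ul s^{2\xi}}$ by construction, and then reruns the entire extension construction (variable-order Taylor polynomials governed by $\Ga_{\ul s^{2\xi}}(Ld(x))$, the partition of unity, and the estimates replacing Lemmas 5.2--5.4 of \cite{RainerSchindl17}) with $\fV$ in place of $\fS$. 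So the sufficiency direction requires redoing the construction, which is the actual content of the paper. Your treatment of (i)$\Rightarrow$(ii) and the reduction to $\si=\ka$ concave with $\ka(t)=o(t)$ are fine and match the paper, which cites \cite{BonetMeiseTaylor92} for the necessity.
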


(Here $\cB^{\{\om\}}$ denotes the Roumieu class defined by the weight function $\om$; we use the symbol $\cB$ to emphasize that the 
defining estimates are global, cf.\ \cite[2.2 and 2.6]{RainerSchindl17}.)
It means that Theorem 1.3 of \cite{RainerSchindl17} holds without the assumption (1.3) that the associated weight matrix $\fS$ of $\si$
satisfies
\begin{equation}
   \label{intro:good}
    \A S \in \fS \E T \in \fS \E C \ge 1 \A 1\le j \le k : \frac{S_j}{jS_{j-1}} \le C\, \frac{T_k}{k T_{k-1}}. 
\end{equation}
\Cref{mainadd} is proved in \Cref{sec:proof}. 
In \Cref{sec:questions} we clarify some questions that remained open in \cite{RainerSchindl17} and 
obtain several characterizations of concave weight functions. For an overview of the background of \Cref{mainadd} we 
refer to the introduction in \cite{RainerSchindl17}.  
We use the notation and the definitions of said paper; 
the concept of \emph{weight matrices} is recalled in the appendix at the end of this paper. 

Note that in the special case that $\om$ and $\si$ coincide we recover the result of \cite{BBMT91}:

\begin{corollary}
  \label{thm:preservingclass}
  Let $\om$ be a weight function. The following conditions are equivalent:
  \begin{enumerate}
    \item[(i')] For every compact $E \subseteq \R^n$ we have $j^\infty_E(\cB^{\{\om\}}(\R^n)) = \cB^{\{\om\}}(E)$.
    \item[(ii')] There is $C>0$ such that $\int_{1}^\infty \frac{\om(tu)}{u^2}\,du \le C\om(t) + C$ for all $t>0$.
  \end{enumerate}
\end{corollary}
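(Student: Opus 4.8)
\Cref{thm:preservingclass} is the diagonal case $\si=\om$ of \Cref{mainadd}, and the plan is to deduce it from there. First I would note that the inclusion $j^\infty_E(\cB^{\{\om\}}(\R^n))\subseteq\cB^{\{\om\}}(E)$ holds for every weight function $\om$ and every compact $E$, since by Taylor's theorem a function with global $\{\om\}$-estimates restricts to a Whitney jet obeying the same estimates; this is exactly the statement that the restriction map $j^\infty_E$ is well defined into $\cB^{\{\om\}}(E)$, which is built into the definitions of \cite{RainerSchindl17}. Hence the equality in (i') is equivalent to the single inclusion $j^\infty_E(\cB^{\{\om\}}(\R^n))\supseteq\cB^{\{\om\}}(E)$, which is condition (i) of \Cref{mainadd} for $\si=\om$. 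Since condition (ii) of \Cref{mainadd} with $\si=\om$ is verbatim (ii'), the equivalence (i')$\Leftrightarrow$(ii') will follow as soon as the hypotheses of \Cref{mainadd} are arranged.

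The second step is to supply those hypotheses for an arbitrary weight function $\om$; the dividing line is non-quasianalyticity. If $\int_1^\infty \om(t)/t^2\,dt=\infty$, then the substitution $s=tu$ turns the left-hand side of (ii') into $t\int_t^\infty \om(s)/s^2\,ds=\infty$, so (ii') fails; and (i') fails as well, because taking $E$ to be a single point reduces $j^\infty_E$ to the Borel map, whose surjectivity onto $\cB^{\{\om\}}(E)$ characterizes non-quasianalyticity. Thus both conditions are false and the equivalence holds vacuously. If instead $\om$ is non-quasianalytic, then $\om(t)=o(t)$ is automatic: from $\int_t^{2t}\om(s)/s^2\,ds\ge\om(t)/(2t)$ and the vanishing of the tail of the convergent integral one gets $\om(t)/t\to0$. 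Finally I would replace $\om$ by an equivalent concave weight function $\ka$; equivalent weights define the same space $\cB^{\{\om\}}$ and the same jet spaces, and condition (ii') is invariant under such a replacement because $\om(t)$ and $\int_1^\infty \om(tu)/u^2\,du$ then differ only by multiplicative and additive constants. With $\ka$ concave, non-quasianalytic and $o(t)$, \Cref{mainadd} applies with $\si=\ka$ and yields (i')$\Leftrightarrow$(ii').

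The routine part is the specialization $\si=\om$ together with the automatic inclusion $\subseteq$; the main obstacle is the hypothesis bookkeeping. Concretely, I expect the two points requiring care to be the passage to an equivalent concave weight together with the invariance of (ii') under it, and the quasianalytic case, where one must know that (i') already forces non-quasianalyticity via the failure of the Borel map. Once these are in place, \Cref{thm:preservingclass} follows immediately from \Cref{mainadd}.
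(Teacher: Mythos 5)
Your handling of (ii')~$\Rightarrow$~(i') is essentially the paper's argument, except for where the concavity comes from: the paper derives it from (ii') itself, since (ii') means $\om$ is equivalent to the concave weight $\ka$ of \eqref{kappa} by \cite[Proposition 1.3]{MeiseTaylor88}, and also gets $\om(t)=o(t)$ from \cite[Remark 3.20]{BonetMeiseTaylor92}; after that \Cref{mainadd} with $\si=\om$ gives (i'). Your treatment of the quasianalytic case also matches the paper in spirit, though your assertion that surjectivity of the Borel map onto $\cB^{\{\om\}}(\{0\})$ ``characterizes non-quasianalyticity'' is not just the Denjoy--Carleman theorem: the paper needs two separate arguments, one via \cite{RainerSchindl15} when $t\ne O(\om(t))$ and an explicit example ($\sum x^k$ versus $1/(1-x)$) when $t=O(\om(t))$.

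The genuine gap is in (i')~$\Rightarrow$~(ii') for non-quasianalytic $\om$. You ``replace $\om$ by an equivalent concave weight function $\ka$'' as though every non-quasianalytic weight function admitted such a replacement, but it does not: by \Cref{thm:omegachar} equivalence to a concave weight function is the same as stability of $\cB^{\{\om\}}$ under composition, a genuinely restrictive condition that many non-quasianalytic weight functions fail. In the paper the concave replacement is licensed only by (ii') (via \cite[Proposition 1.3]{MeiseTaylor88}), i.e.\ by the very condition you are trying to derive, so your route is circular at this point: without concavity you cannot invoke \Cref{mainadd} at all. The paper sidesteps this by noting that the necessity of (ii') for (i') is well known --- it is proved directly for arbitrary weight functions in \cite{BBMT91}, \cite{BonetMeiseTaylor92}, without passing through \Cref{mainadd}. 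To repair your proof you must either cite that result for this direction or give a direct argument that (i') fails whenever (ii') does, with no concavity assumption.
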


Indeed,
if $\om$ satisfies (ii') then it is non-quasianalytic, equivalent to a concave weight function 
\cite[Proposition 1.3]{MeiseTaylor88}, and $\om(t) = o(t)$ as $t \to \infty$ \cite[Remark 3.20]{BonetMeiseTaylor92}. 
That (ii') is a necessary condition for (i') is well-known. 
Note that also (i') implies that $\om$ is non-quasianalytic. 
 Indeed, if $\om$ is quasianalytic, 
 then the Borel map $j^\infty_{\{0\}} : \cB^{\{\om\}}(\R^n) \to \cB^{\{\om\}}(\{0\})$ is never surjective.
 For $t \ne O(\om(t))$ as $t \to \infty$ this follows from \cite{RainerSchindl15}, 
 for $t = O(\om(t))$ as $t \to \infty$ consider e.g.\ the formal series $\sum_{k=0}^\infty x^k$ which converges to the  
 unbounded real analytic function $1/(1-x)$ function for $|x|<1$.

\section{Proof of \texorpdfstring{\Cref{mainadd}}{Theorem 1}} \label{sec:proof}

\subsection*{Preparations} 
First we recall a few definitions and facts. 
Let $m =(m_k)$ be a positive sequence satisfying $m_0 = 1$ and $m_k^{1/k} \to \infty$.
The \emph{log-convex minorant} of $m$ is given by  
\[
  \ul m_k := \sup_{t>0} \frac{t^k}{\exp(\om_m(t))}, \quad k \in \N,
\]
where 
\[
  \om_m (t) :=  \sup_{k \in \N}  \log \Big(\frac{t^k}{m_k}\Big), \quad t>0. 
\] 
The function $\om_m$ is increasing, convex in $\log t$, and zero for sufficiently small $t>0$. 
Related is the function $h_m(t) := \inf_{k \in \N} m_k t^k$, for $t>0$, and $h_m(0):=0$. 
It is increasing, continuous, positive for $t>0$, and equals $1$ for large $t$.

Let $m=(m_k)$ be a positive \emph{log-convex} sequence (i.e., $m = \ul m$) such that $m_0 =1$ and 
$m_k^{1/k} \to \infty$. 
Then the functions $\ol \Ga_m$ and $\ul \Ga_m$ defined in \cite[Definition 3.1]{RainerSchindl17} coincide,
we simply write $\Ga_m$ in this case; note that log-convexity and $m_k^{1/k} \to \infty$ imply $m_{k}/m_{k-1} \to \infty$. Thus
\begin{equation} \label{eq:Gamma}
  \Ga_m(t) = \min\{ k : h_m(t) = m_k t^k\} = \min \Big\{k : \frac{m_{k+1}}{m_k} \ge \frac1t\Big\}, \quad t>0. 
\end{equation}
By \cite[Lemma 3.2]{RainerSchindl17}, $\Ga_m$ is decreasing, tending to $\infty$ as $t \to 0$, and 
\begin{equation} \label{eq:Gamma1}
   k \mapsto m_k t^k \text{ is decreasing for } k \le \Ga_m(t).
\end{equation}

Recall that with every weight function $\si$ (always understood as defined in \cite[Section 2.1]{RainerSchindl17}) 
is associated a weight matrix $\fS = \{S^\xi\}_{\xi>0}$, where
\[
  S^\xi_k := \exp\big(\tfrac1\xi \vh^*(\xi k\big)), \quad \text{ (here $\vh = \si \o \exp$ and $\vh^*$ is its Young conjugate), }   
\]
such that 
$\cB^{\{\si\}} = \cB^{\{\fS\}}$ and $\cB^{(\si)} = \cB^{(\fS)}$ algebraically and topologically; 
cf.\ \cite[2.5]{RainerSchindl17} and \cite{RainerSchindl12}. 
In the following we set $s^\xi_k := S^\xi_k/k!$. 

The next proposition
shows that for a weight function $\si$
which is equivalent 
to a concave weight function
and satisfies $\si(t) = o(t)$ as $t \to \infty$  we additionally have $\cB^{\{\si\}} = \cB^{\{\ul \fS\}}$ and $\cB^{(\si)} = \cB^{(\ul \fS)}$, 
where $\ul \fS = \{\ul S^\xi\}_{\xi>0}$ and
\[
    \ul S^\xi_k:= k!\, \ul s^\xi_k.
\]
In particular, $\ul \fS$ satisfies \eqref{intro:good}. We say that $\ul S^\xi$ is \emph{strongly log-convex} meaning that 
$\ul s^\xi_k=\ul S^\xi_k/k!$ is log-convex. 
(Note the abuse of notation: $\ul S^\xi$ is \emph{not} necessarily the log-convex minorant of $S^\xi$; this will cause no confusion.)
Recall that two weight functions $\om$ and $\si$ are called \emph{equivalent} 
if $\om(t)= O(\si(t))$ and $\si(t) = O (\om(t))$ as $t \to \infty$; this means that they define the same ultradifferentiable class. 

\begin{proposition} \label{prop:strongmatrix}
  Let $\si$ be a weight function satisfying $\si(t) = o(t)$ as $t \to \infty$ which is equivalent 
  to a concave weight function. For each $\xi>0$ there exist constants $A,B,C >0$ such that 
  \begin{equation} \label{eq:strong}
  		A^{-1} s^{\xi/B}_k \le \ul s^\xi_k \le s^\xi_k \le C^k \ul s^{B\xi}_k \quad \text{ for all } k \in \N. 
  \end{equation}
  Moreover, there is a constant $H\ge 1$ such that $\ul s^\xi_{j+k} \le H^{j+k} \ul s^{2\xi}_{j} \ul s^{2\xi}_{k}$,
  for all $\xi>0$ and all $j,k \in \N$, and 
  thus
  $h_{\ul s^{\xi}}(t) \le h_{\ul s^{2\xi}}(Ht)^2$, for all $\xi>0$ and all $t>0$.  
\end{proposition}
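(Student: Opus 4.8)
The plan is to treat the three assertions in turn: the chain \eqref{eq:strong} carries almost all the weight, while the submultiplicativity of $\ul s^\xi$ and the resulting bound on $h_{\ul s^\xi}$ will be soft consequences of the convexity of $\vh^*$. Throughout, the hypothesis $\si(t)=o(t)$ guarantees $s^\xi_0=1$ and $(s^\xi_k)^{1/k}\to\infty$, so that $\om_{s^\xi}$, $h_{s^\xi}$ and the log-convex minorant $\ul s^\xi$ are all defined; the middle inequality $\ul s^\xi_k\le s^\xi_k$ is immediate from the definition of the minorant. For the outer inequalities I would fix a concave weight function $\tau$ equivalent to $\si$ (then $\tau(t)=o(t)$ as well), with associated matrix $\fT=\{T^\xi\}$ and $t^\xi_k:=T^\xi_k/k!$. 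The engine of the proof is the claim that, $\tau$ being concave, each $t^\xi$ is log-convex, so that $\ul t^\xi=t^\xi$.

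Granting this, I would convert the equivalence $\si\sim\tau$ into a comparison of matrices with \emph{constant} factors: from $\si\le M\tau+M$ and $\tau\le M'\si+M'$, passing to Young conjugates (which reverse inequalities) yields $e^{-M/\xi}\,t^{\xi/M}_k\le s^\xi_k\le e^{1/\xi}\,t^{M'\xi}_k$ for all $k$, after division by $k!$. Since $t^{\xi/M}$ is log-convex and $e^{-M/\xi}t^{\xi/M}\le s^\xi$, the left sequence lies below the largest log-convex minorant, i.e.\ $e^{-M/\xi}t^{\xi/M}_k\le\ul s^\xi_k$; bounding $t^{\xi/M}$ below by a constant multiple of $s^{\xi/(MM')}$ (again from the displayed comparison) then gives the left inequality of \eqref{eq:strong} with $B=MM'$. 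Symmetrically, $e^{-1/(M'\xi)}t^{M'\xi}$ is a log-convex sequence below $s^{MM'\xi}$, hence below $\ul s^{MM'\xi}$, so $s^\xi_k\le e^{1/\xi}t^{M'\xi}_k\le C\,\ul s^{MM'\xi}_k\le C^k\ul s^{B\xi}_k$ — in fact with a constant, which a fortiori gives the stated $C^k$.

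The one substantial point — the main obstacle — is the log-convexity of $t^\xi$ for concave $\tau$, i.e.\ that $k\mapsto\tfrac1\xi\vh_\tau^*(\xi k)-\log k!$ is convex, where $\vh_\tau=\tau\o\exp$. Since $\log(k+1)!+\log(k-1)!-2\log k!=\log\tfrac{k+1}{k}$, this amounts to the second-difference estimate $\vh_\tau^*(\xi(k+1))+\vh_\tau^*(\xi(k-1))-2\vh_\tau^*(\xi k)\ge\xi\log\tfrac{k+1}{k}$. Writing the left side as $\int_{\xi k}^{\xi(k+1)}\big[(\vh_\tau^*)'(s)-(\vh_\tau^*)'(s-\xi)\big]\,ds$, it suffices to show $(\vh_\tau^*)'(y_2)-(\vh_\tau^*)'(y_1)\ge\log(y_2/y_1)$ for $y_1<y_2$. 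Here $(\vh_\tau^*)'(y)=\log u$ where $y=u\,\tau'(u)$, so with $y_i=u_i\tau'(u_i)$ the concavity of $\tau$ (monotonicity of $\tau'$) gives $y_2/y_1=\tfrac{u_2\tau'(u_2)}{u_1\tau'(u_1)}\le u_2/u_1$, i.e.\ $\log(y_2/y_1)\le\log(u_2/u_1)=(\vh_\tau^*)'(y_2)-(\vh_\tau^*)'(y_1)$, as required. The care here is to run the argument with one-sided derivatives (legitimate since $\vh_\tau$ is convex and $\tau'$ monotone, hence defined off a countable set) and to dispose of the finitely many small indices where $\tau'$ vanishes separately.

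For the remaining assertions I note first that, for \emph{any} weight function, convexity of the conjugate $\vh^*$ of $\vh=\si\o\exp$ gives $\vh^*(\xi(j+k))\le\tfrac12\vh^*(2\xi j)+\tfrac12\vh^*(2\xi k)$, hence $S^\xi_{j+k}\le S^{2\xi}_j S^{2\xi}_k$ and, since $\tfrac{j!\,k!}{(j+k)!}\le1$, the submultiplicativity $s^\xi_{j+k}\le s^{2\xi}_j s^{2\xi}_k$. Taking the infimum over all splittings $n=j+k$ gives $h_{s^{2\xi}}(r)^2=\inf_{j,k}s^{2\xi}_j s^{2\xi}_k r^{j+k}\ge\inf_n s^\xi_n r^n=h_{s^\xi}(r)$. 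The definitions recalled at the start rewrite as $\ul s^\xi_n=\sup_{r>0}r^{-n}h_{s^\xi}(r)$, so $\ul s^\xi_{j+k}=\sup_r r^{-(j+k)}h_{s^\xi}(r)\le\sup_r\big(r^{-j}h_{s^{2\xi}}(r)\big)\big(r^{-k}h_{s^{2\xi}}(r)\big)\le\ul s^{2\xi}_j\,\ul s^{2\xi}_k$, which is the asserted multiplicativity with $H=1$, and hence with any $H\ge1$. The final bound then drops out: $h_{\ul s^{2\xi}}(t)^2=\inf_{j,k}\ul s^{2\xi}_j\ul s^{2\xi}_k\,t^{j+k}\ge\inf_{j,k}\ul s^\xi_{j+k}\,t^{j+k}=h_{\ul s^\xi}(t)$, and since $h$ is nondecreasing this yields $h_{\ul s^\xi}(t)\le h_{\ul s^{2\xi}}(Ht)^2$ for every $t>0$.
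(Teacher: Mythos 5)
Your overall architecture is viable, and the second half of your argument is correct and in fact cleaner than the paper's: by combining $s^\xi_{j+k}\le s^{2\xi}_j s^{2\xi}_k$ (pure convexity of $\vh^*$, no concavity needed), the pointwise bound $h_{s^\xi}\le h_{s^{2\xi}}^2$, and the identity $\ul s^\xi_n=\sup_{r>0}r^{-n}h_{s^\xi}(r)$, you obtain $\ul s^\xi_{j+k}\le \ul s^{2\xi}_j\,\ul s^{2\xi}_k$ with $H=1$ directly, whereas the paper only derives the doubling inequality $\ul s^\xi_{2k}\le D^{2k}(\ul s^{2\xi}_k)^2$ and must invoke a Matsumoto-type theorem to upgrade it. Reducing \eqref{eq:strong} to the assertion that a concave weight function has a strongly log-convex associated matrix is also a legitimate alternative to the paper's route, which instead compares the associated functions $\om_{S^\xi}$ and $\om_{\ul S^\xi}$ via cited results.

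The gap is in the central claim of that reduction: with the paper's convention $\vh_\tau^*(y)=\sup\{xy-\vh_\tau(x):x\ge 0\}$, the conjugate is \emph{constant} on the interval $[0,\vh_\tau'(0^+)]$, so for all $k\le \vh_\tau'(0^+)/\xi$ one has $t^\xi_k=\mathrm{const}\cdot(k!)^{-1}$, which is log-\emph{concave}; your second-difference inequality $\vh_\tau^*(\xi(k+1))+\vh_\tau^*(\xi(k-1))-2\vh_\tau^*(\xi k)\ge \xi\log\tfrac{k+1}{k}$ has left side $0$ there. (Concretely, $\tau(t)=\sqrt t$, $\xi=1/10$, $k=2$ gives $t_1t_3=e^{-20}/6<e^{-20}/4=t_2^2$.) Your caveat about ``indices where $\tau'$ vanishes'' misdiagnoses this: a concave weight function tending to infinity has $\tau'>0$ everywhere; the problem is the flat initial part of $\vh_\tau^*$, and the number of bad indices grows like $1/\xi$. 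Nor can these indices simply be ``disposed of separately,'' because you use log-convexity \emph{globally}: the inference ``$c\,t^{\xi/M}\le s^\xi$ and $t^{\xi/M}$ log-convex $\Rightarrow$ $c\,t^{\xi/M}\le \ul s^\xi$'' requires log-convexity on all of $\N$, since $\ul s^\xi$ is the global log-convex minorant. The argument is repairable --- your derivative computation does show $t^\xi$ is log-convex from some index $k_0(\xi)$ on, hence the Newton polygon of $t^{\xi/M}$ has a last corner $k_1(\xi)<\infty$ beyond which $\ul t^{\xi/M}=t^{\xi/M}$, and the finitely many exceptional indices can then be absorbed into the $\xi$-dependent constants $A$ and $C$ --- but this step, which is exactly where the concavity hypothesis does its work, is missing as written.
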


\begin{proof}
  Clearly, $\ul s^\xi \le s^\xi$. Let $\ul S^\xi_k:= k!\, \ul s^\xi_k$.
  By \cite[Lemma 3.6]{Jimenez-GarridoSanzSchindl17}, 
  $\om_{S^\xi}$ and $\om_{\ul S^\xi}$ are equivalent, in particular, there exists $C\ge 1$ such that 
  \begin{equation} \label{eq:equi}
  \om_{\ul S^\xi} \le C \om_{S^\xi} + C.
  \end{equation}
  By \cite[Lemma 2.4(3)]{RainerSchindl17} and \cite[Remark 2.5]{RainerSchindl16a}, we have
	\begin{equation*} 
  2 \om_{S^{2\xi}} \le \om_{S^\xi}, \quad \text{ for all } \xi>0.  
  \end{equation*}
  If $n$ is an integer such that $B:= 2^n \ge C$, then 
  $\om_{\ul S^\xi} \le  \om_{S^{\xi/B}} + C$ and hence
  \[
  \ul S^\xi_k = \sup_{t>0} \frac{t^k}{\exp(\om_{\ul S^\xi}(t))} \ge e^{-C} \sup_{t>0} \frac{t^k}{\exp(\om_{S^{\xi/B}}(t))} = 
  e^{-C} S^{\xi/B}_k.
  \]
  This shows the first inequality in \eqref{eq:strong}.

  By \cite[Lemma 3.13]{RainerSchindl17},  
there exists $D\ge 1$ such that for all $\xi>0$, 
  \begin{equation*} 
  2 \om_{s^{2\xi}}(t) \le \om_{s^\xi}(Dt), \quad \text{ for } t>0   
  \end{equation*}
   and therefore
  \begin{align*}
  \ul s^\xi_{2k} 
  = \sup_{t>0} \frac{(Dt)^{2k}}{\exp(\om_{s^\xi}(Dt))}
  \le D^{2k} \sup_{t>0} \frac{t^{2k}}{\exp(2\om_{s^{2\xi}}(t))} 
  = D^{2k} (\ul s^{2\xi}_{k})^2.
  \end{align*}
  Thus, by \cite[Theorem 9.5.1]{Schindl14} (which is a generalization of \cite{Matsumoto84}), 
  there exists a constant $H\ge 1$ such that 
  $\ul s^\xi_{j+k} \le H^{j+k} \ul s^{2\xi}_{j} \ul s^{2\xi}_{k}$, for all $j,k$.
  That $h_{\ul s^{\xi}}(t) \le h_{\ul s^{2\xi}}(Ht)^2$, for all $\xi>0$ and all $t>0$, follows from \cite[Lemma 3.12]{RainerSchindl17}. 
  By \cite[Proposition 3.6]{Schindl15},  
  \begin{equation*}
  2 \om_{\ul S^{2\xi}}(t) \le \om_{\ul S^\xi}(Ht), \quad \text{ for } t>0,   
  \end{equation*}
  for some (possibly different) $H\ge 1$. 
  As above, using \eqref{eq:equi}, we find $\om_{\ul S^{B \xi}}(b t) \le \om_{S^\xi}(t) + 1$ for some constant $0< b \le 1$. 
  Then 
  \begin{align*}
  \ul S^{B \xi}_k  =  \sup_{t>0} \frac{(bt)^k}{\exp(\om_{\ul S^{B\xi}}(bt))}  \ge 
  e^{-1} b^k \sup_{t>0} \frac{t^k}{\exp(\om_{S^{\xi}}(t))} = 
  e^{-1}b^k S^\xi_k.
  \end{align*}
  The last inequality of \eqref{eq:strong} follows.	   
\end{proof}

\Cref{prop:strongmatrix} alone is not enough to get rid of the assumption \eqref{intro:good}. 
It is not clear that $\ul \fS$ has the property that for all $S \in \ul \fS$ there is a $T \in \ul \fS$ 
such that $S_{2k}/S_{2k-1} \lesssim T_{k}/T_{k-1}$. Note that $\fS$ has this property 
(see \cite[Lemma 2.4(4)]{RainerSchindl17}) and it enters crucially in Lemma 3.4
and Proposition 3.7 of
\cite{RainerSchindl17}. \label{reasongood}

We deal with this problem by introducing another intimately related weight matrix $\fV := \{V^\xi\}_{\xi>0}$.  
For each $\xi>0$ we define $V^\xi_k := k!\, v^\xi_k$ by setting
\begin{equation} \label{eq:save-1}
  v^\xi_k := \min_{0 \le j \le k} \ul s^{2\xi}_{j} \ul s^{2\xi}_{k-j}, \quad k \in \N.
\end{equation}
That means that for the sequence of quotients $v^\xi_k/v^\xi_{k-1}$ we have (cf.\ \cite[Lemma 3.5]{Komatsu73}) 
\[
  \Big(\frac{v^\xi_1}{v^\xi_{0}},\frac{v^\xi_2}{v^\xi_{1}},\frac{v^\xi_3}{v^\xi_{2}},\frac{v^\xi_4}{v^\xi_{3}}, \ldots\Big) = 
  \Big(\frac{\ul s^{2\xi}_1}{\ul s^{2\xi}_{0}},\frac{\ul s^{2\xi}_1}{\ul s^{2\xi}_{0}},\frac{\ul s^{2\xi}_2}{\ul s^{2\xi}_{1}}, 
  \frac{\ul s^{2\xi}_2}{\ul s^{2\xi}_{1}},\frac{\ul s^{2\xi}_3}{\ul s^{2\xi}_{2}}, \frac{\ul s^{2\xi}_3}{\ul s^{2\xi}_{2}}, \ldots\Big). 
\]
Thus the sequence $v^\xi = (v^\xi_k)$ is log-convex and satisfies
\begin{equation} \label{eq:save}
    \frac{v^\xi_{2k-1}}{v^\xi_{2k-2}} = \frac{v^\xi_{2k}}{v^\xi_{2k-1}} = \frac{\ul s^{2\xi}_{k}}{\ul s^{2\xi}_{k-1}}, 
    \quad \text{ for all } k\ge 1.
\end{equation}
So, in view of \eqref{eq:Gamma}, 
\begin{equation} \label{eq:save2}
   2 \Ga_{\ul s^{2\xi}}(t) = \Ga_{v^\xi}(t), \quad \text{ for all } t>0. 
\end{equation}
By \Cref{prop:strongmatrix}, there is $H\ge 1$ such that for all $\xi>0$
\begin{equation} \label{eq:save1}
  \ul s^\xi_k \le H^k v^\xi_k \le H^k \ul s_k^{2\xi}, \quad \text{ for all } k \in \N.
\end{equation}
Thus, we also have 
$\cB^{\{\si\}} = \cB^{\{\fV\}}$ and $\cB^{(\si)} = \cB^{(\fV)}$ algebraically and topologically.

\subsection*{Proof of \texorpdfstring{\Cref{mainadd}}{Theorem 1}}
  The implication (i) $\Rightarrow$ (ii) follows from \cite{BonetMeiseTaylor92}. So we only prove the converse implication.  
  Condition (ii) means that the weight function 
  \begin{equation} \label{kappa}
  	\ka(t) := \int_1^\infty \frac{\om(tu)}{u^2}\, du
  \end{equation} 
  satisfies $\ka(t) = O(\si(t))$ as $t\to \infty$, i.e., $\cB^{\{\si\}} \subseteq \cB^{\{\ka\}}$. 
  Now $\ka$ is concave and $\ka(t)= o(t)$ as $t \to \infty$, see \cite[Proposition 1.3]{MeiseTaylor88}. 
  We will show that Whitney ultrajets of class $\cB^{\{\ka\}}$ admit extensions of class $\cB^{\{\om\}}$.
  Thus from now on we assume without loss of generality that $\si = \ka$ is concave. 
  Since $\om$ is increasing we have $\si=\ka \ge \om$ and hence, if $\fW = \{W^\xi\}_{\xi>0}$ denotes the weight matrix associated with $\om$,
  \begin{equation} \label{eq:order}
    \ul S^{\xi} \le S^{\xi} \le W^\xi, \quad \text{ for all } \xi>0.
  \end{equation}
  Moreover, \Cref{prop:strongmatrix} as well as \eqref{eq:save2} and \eqref{eq:save1} apply. 
  Let us now indicate the necessary changes in the proof of \cite[Theorem 1.3]{RainerSchindl17}. 
  The changes also lead to some simplifications. 
  We provide details in the hope that this contributes to a better understanding.

  $\bullet$ Every Whitney ultrajet $F=(F^\al)$ of class $\cB^{\{\si\}}$ on the compact set $E \subseteq \R^n$ 
  is an element of $\cB^{\{V^\xi\}}(E)$ for some $\xi>0$, i.e.,  
there exist $C>0$ and $\rh \ge 1$ 
such that 
\begin{gather}
  |F^\al(a)| \le C \rh^{|\al|} \,  V^\xi_{|\al|}, \quad \al \in \N^n,~ a \in E,
   \label{jets1}
  \\
  |(R^k_a F)^\al(b)| \le C \rh^{k+1} \, |\al|!\, v^\xi_{k+1}\,  |b-a|^{k+1-|\al|}, \quad k \in \N,\, |\al| \le k,~ a,b \in E.  
  \label{jets2}
\end{gather}  
   Let $p\in \N$ be fixed (and to be specified later). 
Let $\{\vh_{i,p}\}_{i\in \N}$ be the partition of unity provided by 
\cite[Proposition 4.9]{RainerSchindl17}, relative to the 
family of cubes $\{Q_i\}_{i \in \N}$ from \cite[Lemma 4.7]{RainerSchindl17},
and let $r_0 = r_0(p)$ be the constant appearing in this proposition. 
The center of $Q_i$ is denoted by $x_i$.
   We claim that an extension of class $\cB^{\{\om\}}$ of $F$ to a suitable neighborhood of $E$ in $\R^n$ is provided by
  \[
  f(x) := 
  \begin{cases}
    \sum_{i\in \N} \vh_{i,p}(x) \, T_{\hat x_i}^{p(x_i) } F(x),  & \text{ if } x \in \R^n \setminus E, \\
    F^0(x), & \text{ if } x \in  E,
  \end{cases}
  \]
  where, given $x \in \R^n \setminus E$,  $\hat x$ is any point in $E$ with $d(x) := d(x,E) = |x-\hat x|$ and  
  \[
    p(x):= \max\{2  \Ga_{\ul s^{2\xi}}(L d(x)) -1,0\}.
  \]
  Here $L$ is a positive constant to be specified below.
  Recall that $Q^*_i$ is the closed cube with the same center as $Q_i$ expanded by the factor $9/8$. 
  By \cite[Corollary 4.8]{RainerSchindl17}, 
  \begin{equation} \label{cor48}
    \frac12 d(x) \le d(x_i) \le 3 d(x), \quad \text{ for all } x \in Q_i^*. 
  \end{equation}
  Then $d(x)<1/(3L\ul s^{2\xi}_1)$ guarantees that both $\Ga_{\ul s^{2\xi}}(L d(x_i))$ and $\Ga_{\ul s^{2\xi}}(L d(x))$ are $\ge 1$, 
  by \eqref{eq:Gamma},
  thus $p(x_i)= 2  \Ga_{\ul s^{2\xi}}(L d(x_i)) -1$ and $p(x)= 2  \Ga_{\ul s^{2\xi}}(L d(x)) -1$.

  $\bullet$ Replace \cite[Lemma 5.2]{RainerSchindl17} by the following lemma. 
  The only difference in the proof is that one uses \eqref{eq:save2} instead of \cite[(5.4)]{RainerSchindl17}.

\begin{lemma}
    \label{proposition9}
There is a constant $C_0 = C_0(n) >1$ such that,
for all Whitney ultrajets $F= (F^\al)_{\al}$ of class $\cB^{\{V^\xi\}}$ that satisfy \eqref{jets1} and \eqref{jets2},
all $L \ge C_0 \rh$, all $x \in \R^n$, and all $\al\in \N^n$, 
\begin{align}
  |(T_{\hat x}^{p(x)} F)^{(\al)}(x)| &\le C (2L)^{|\al|+1} V^\xi_{|\al|},  \label{prop91} 
  \intertext{and, if $|\al| < p(x)$,}
  |(T_{\hat x}^{p(x)}F)^{(\al)}(x)-F^\al(\hat x)| &\le C (2L)^{|\al|+1} |\al|!\, v^\xi_{|\al|+1} d(x).
  \label{prop92}
\end{align}
  \end{lemma}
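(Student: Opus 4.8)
The plan is to compute directly with the Taylor polynomial. Writing $a=\hat x$ and $p=p(x)$, differentiation of $T_a^pF(x)=\sum_{|\be|\le p}\tfrac{F^\be(a)}{\be!}(x-a)^\be$ gives $(T_a^pF)^{(\al)}(x)=\sum_{|\ga|\le p-|\al|}\tfrac{F^{\al+\ga}(a)}{\ga!}(x-a)^\ga$, whose $\ga=0$ term is precisely $F^\al(a)$. Thus $(T_a^pF)^{(\al)}(x)-F^\al(a)$ is the same sum restricted to $|\ga|\ge1$, and both \eqref{prop91} and \eqref{prop92} will drop out of a single estimate, the second gaining one power of $d(x)=|x-a|$ from the absent constant term. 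First I would clear the degenerate cases: for $x\in E$ one has $a=x$, $d(x)=0$, and the derivatives reproduce $F^\al(x)$, so \eqref{prop91} is \eqref{jets1} and \eqref{prop92} reads $0=0$; if $p(x)=0$ then $T_a^0F=F^0(a)$ is constant and again everything reduces to \eqref{jets1}. So I assume $x\notin E$ and $p(x)=2\Ga_{\ul s^{2\xi}}(Ld(x))-1\ge1$, which by \eqref{eq:save2} means $\Ga_{v^\xi}(Ld(x))=p(x)+1$.

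Next I would insert the available bounds. By \eqref{jets1}, $|F^{\al+\ga}(a)|\le C\rh^{|\al|+|\ga|}(|\al|+|\ga|)!\,v^\xi_{|\al|+|\ga|}$, and $|(x-a)^\ga|\le d(x)^{|\ga|}$. Collecting the multi-indices $\ga$ according to $j=|\ga|$ via $\sum_{|\ga|=j}\tfrac1{\ga!}=\tfrac{n^j}{j!}$ and using $\tfrac{(|\al|+j)!}{j!}\le 2^{|\al|+j}|\al|!$, the derivative is bounded by $C\,|\al|!\,(2\rh)^{|\al|}\sum_{j}(2n\rh)^j\,v^\xi_{|\al|+j}\,d(x)^j$, the index $j$ running over $0\le j\le p-|\al|$ for \eqref{prop91} and over $1\le j\le p-|\al|$ for \eqref{prop92}.

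The key step is that every $k=|\al|+j$ occurring here satisfies $k\le p(x)=\Ga_{v^\xi}(Ld(x))-1<\Ga_{v^\xi}(Ld(x))$, so by \eqref{eq:Gamma1} the map $k\mapsto v^\xi_k(Ld(x))^k$ is decreasing on this range. Hence $v^\xi_{|\al|+j}\,d(x)^j\le v^\xi_{|\al|}\,L^{-j}$ in the first case and $v^\xi_{|\al|+j}\,d(x)^j\le v^\xi_{|\al|+1}\,L^{1-j}d(x)$ in the second, which turns both sums into geometric series in the ratio $2n\rh/L$. Fixing $C_0=C_0(n)$ large enough that $L\ge C_0\rh$ forces $2n\rh/L\le\tfrac12$, the series are bounded by absolute constants; replacing $(2\rh)^{|\al|}$ by $(2L)^{|\al|}$ and absorbing the stray factor of $L$ into one further power of $2L$ then yields \eqref{prop91} and \eqref{prop92} exactly, recalling $V^\xi_{|\al|}=|\al|!\,v^\xi_{|\al|}$.

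I expect the real content to sit not in this computation but in the calibration that makes \eqref{eq:Gamma1} applicable: the degree $p(x)$ is pinned to $\Ga_{v^\xi}(Ld(x))-1$ precisely so that the top index $k=p(x)$ still falls in the decreasing regime, and this is exactly what the doubling identity \eqref{eq:save2} for the auxiliary matrix $\fV$ buys. The main things to watch are therefore bookkeeping ones: checking that $k=p(x)$ is admissible (it is, since $\Ga_{v^\xi}(Ld(x))=p(x)+1$) and matching $C_0(n)$ against the combinatorial factor $2n$ so that the geometric series converges with room to spare. Notably, the moderate-growth estimates of \Cref{prop:strongmatrix} are not needed for this lemma; they enter further on.
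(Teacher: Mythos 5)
Your proposal is correct and follows essentially the same route as the paper's proof, which is carried out by reference to \cite[Lemma 5.2]{RainerSchindl17}: a direct estimate of the differentiated Taylor polynomial using \eqref{jets1}, the multinomial identity, and the monotonicity \eqref{eq:Gamma1} of $k \mapsto v^\xi_k (Ld(x))^k$ up to $\Ga_{v^\xi}(Ld(x))$, with \eqref{eq:save2} supplying exactly the calibration of $p(x)$ that you identify as the key point. Your observation that \Cref{prop:strongmatrix} is not needed here is also accurate.
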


  We remark that (here and below) by $(T_{\hat x}^{p(x)} F)^{(\al)}(x)$ we mean the $\al$-th partial derivative of the polynomial 
  $y \mapsto T_{\hat x}^{p(x)} F(y)$ evaluated at $y =x$.

  $\bullet$ Replace \cite[Lemma 5.3]{RainerSchindl17} by:

  \begin{lemma} \label{lem:H1}
  There is a constant $C_1 = C_1(n)>0$ such that for all $L>C_1 \rh$, all $\be \in \N^n$, 
  and all $x \in Q_i^*$ with $d(x)<1/(3L\ul s^{2\xi}_1)$, 
  \begin{align} \label{H1}
  |\p^\be (T_{\hat x_i}^{p(x_i) } F - 
  T_{\hat x}^{p(x_i)) } F) (x)| 
  &\le C   
     L^{|\be|+1} \ul S^{2\xi}_{|\be|} \, h_{\ul s^{2\xi}}(L d(x_i)).
\end{align}
\end{lemma}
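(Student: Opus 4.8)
The plan is to reduce the whole estimate to the single remainder bound \eqref{jets2} by re-expanding the two Taylor polynomials about the common point $\hat x$. Write $a:=\hat x_i$ and $b:=\hat x$, both of which lie in $E$, and put $p:=p(x_i)$. Since $T_a^{p}F$ is a polynomial of degree $\le p$, it coincides with its own Taylor expansion of order $p$ about $b$; differentiating this exact finite expansion and evaluating at $x$ gives, for every $\be$ with $|\be|\le p$,
\[
\p^\be\big(T_a^{p}F-T_b^{p}F\big)(x)=-\sum_{|\ga|\le p-|\be|}\frac{(x-b)^\ga}{\ga!}\,(R^{p}_a F)^{\be+\ga}(b),
\]
because $(T_b^{p}F)^{(\de)}(b)=F^\de(b)$ and $(T_a^{p}F)^{(\de)}(b)-F^\de(b)=-(R^{p}_a F)^\de(b)$ for $|\de|\le p$. (For $|\be|>p$ the left-hand side is zero and there is nothing to prove.) This is the key identity; everything else is estimation.

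Next I would insert \eqref{jets2} with $\al=\be+\ga$ and $k=p$ (note $|\be+\ga|\le p$), giving $|(R^{p}_a F)^{\be+\ga}(b)|\le C\rh^{p+1}(|\be|+|\ga|)!\,v^\xi_{p+1}\,|b-a|^{p+1-|\be|-|\ga|}$. On $Q_i^*$ the geometry is controlled: $|x-b|=d(x)$, and using \eqref{cor48} together with the size of the Whitney cube $Q_i$, both $|x-b|$ and $|b-a|=|\hat x-\hat x_i|$ are $\le c_n\,d(x_i)$ for a dimensional constant $c_n$. After substituting these bounds every summand carries the same total power $d(x_i)^{p+1-|\be|}$, and summing the elementary factors $\sum_{|\ga|=j}\frac{1}{\ga!}=\frac{n^{j}}{j!}$ against $(|\be|+j)!$, together with $\binom{|\be|+j}{|\be|}\le 2^{|\be|+j}$, produces a factor $|\be|!$ times a purely combinatorial constant bounded by $C_n^{\,p}$. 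Collecting terms yields
\[
\big|\p^\be\big(T_a^{p}F-T_b^{p}F\big)(x)\big|\le C\,\rh^{p+1}C_n^{\,p}\,|\be|!\,v^\xi_{p+1}\,\big(c_n d(x_i)\big)^{p+1-|\be|}.
\]

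The heart of the matter is to convert $|\be|!\,v^\xi_{p+1}\,d(x_i)^{p+1-|\be|}$ into the advertised $\ul S^{2\xi}_{|\be|}\,h_{\ul s^{2\xi}}(Ld(x_i))$. Here \eqref{eq:save2} is decisive: the hypothesis $d(x)<1/(3L\ul s^{2\xi}_1)$ forces $p+1=2\,\Ga_{\ul s^{2\xi}}(Ld(x_i))=:2m$, and by \eqref{eq:save-1} and log-convexity the even-index value is $v^\xi_{2m}=(\ul s^{2\xi}_m)^2$. Since $m=\Ga_{\ul s^{2\xi}}(Ld(x_i))$ realizes the infimum defining $h_{\ul s^{2\xi}}$, we have $\ul s^{2\xi}_m\,d(x_i)^{m}=L^{-m}h_{\ul s^{2\xi}}(Ld(x_i))$, whence $v^\xi_{p+1}\,d(x_i)^{p+1}=L^{-2m}h_{\ul s^{2\xi}}(Ld(x_i))^2$. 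Splitting off one factor $h_{\ul s^{2\xi}}(Ld(x_i))\le \ul s^{2\xi}_{|\be|}\big(Ld(x_i)\big)^{|\be|}$ (the infimum bound at index $|\be|$) absorbs the remaining power of $d(x_i)$ into $\ul s^{2\xi}_{|\be|}$ and leaves
\[
v^\xi_{p+1}\,d(x_i)^{p+1-|\be|}\le L^{|\be|-(p+1)}\,\ul s^{2\xi}_{|\be|}\,h_{\ul s^{2\xi}}(Ld(x_i)),
\]
with $|\be|-(p+1)\le -1$ since $|\be|\le p$. Multiplying by $|\be|!$ gives $\ul S^{2\xi}_{|\be|}$, and it remains only to absorb $\rh^{p+1}C_n^{\,p}c_n^{\,p+1-|\be|}L^{|\be|-(p+1)}$ into $C\,L^{|\be|+1}$; this amounts to $\big(\rh\,C_n\,c_n\big)^{p+1}\le L^{p+2}$, which holds once $L>C_1\rh$ with $C_1=C_1(n)$ chosen larger than the dimensional constants $C_n$ and $c_n$. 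I expect the bookkeeping in this last absorption — keeping the powers of $L$, $\rh$ and the dimensional constants straight while exploiting that the loss of regularity is encoded precisely in the doubling $p+1=2\,\Ga_{\ul s^{2\xi}}$ — to be the only genuinely delicate point; the reduction of the square $h_{\ul s^{2\xi}}^2$ back to a single power of $h_{\ul s^{2\xi}}$ is exactly what frees up the two surplus powers of $L$.
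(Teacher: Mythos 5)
Your proof is correct and takes essentially the same route as the paper's: the paper's opening estimate is just a citation of \cite[Lemma 5.1]{RainerSchindl17}, whose content is precisely your re-expansion of the two Taylor polynomials about the common point $\hat x$ combined with \eqref{jets2} and the Whitney-cube geometry. The second half — writing $p(x_i)+1=2\Ga_{\ul s^{2\xi}}(Ld(x_i))$, bounding $v^\xi_{p+1}\le(\ul s^{2\xi}_{q})^2$ via \eqref{eq:save-1}, converting to $h_{\ul s^{2\xi}}(Ld(x_i))^2$ and trading one factor for $\ul s^{2\xi}_{|\be|}(Ld(x_i))^{|\be|}$ — is exactly the paper's argument.
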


\begin{proof}
  It suffices to consider $|\be| \le p(x_i) = 2  \Ga_{\ul s^{2\xi}}(L d(x_i)) -1 =: 2q-1$. Let $H_1$ denote the left-hand side of \eqref{H1}.
  By \cite[Lemma 5.1 and Corollary 4.8]{RainerSchindl17} and \eqref{eq:save-1}, 
\begin{align*}
  H_1 
  \le C (2n^2 \rh)^{2q} |\be|! \, v^\xi_{2q}  (6d(x_i))^{2q-|\be|}
  \le
  C  (2n^2 \rh)^{2q} 
  |\be|! \, (\ul s^{2\xi}_{q})^2  (6 d(x_i))^{2q-|\be|}.
\end{align*}
By the definition of $q$,
$h_{\ul s^{2\xi}}(L d(x_i)) = \ul s^{2\xi}_{q} (L d(x_i))^{q} \le 
\ul s^{2\xi}_{k} (L d(x_i))^{k}$ for all  $k$.
Thus  
\begin{align*}
  H_1 
  &\le  C   \Big(\frac{12n^2  \rh}{ L}\Big)^{2q} 
   \,  L^{|\be|}\, |\be|!\,\ul s^{2\xi}_{|\be|} \, h_{\ul s^{2\xi}}(L d(x_i)).
\end{align*}
If $L >  12 n^2   \, \rh$, then \eqref{H1} follows. 
\end{proof}

$\bullet$ Replace \cite[Lemma 5.4]{RainerSchindl17} by:

\begin{lemma} \label{lem:H2}
  There is a constant $C_2 = C_2(n)>0$ such that for all $L>C_2 \rh$, all $\be \in \N^n$, and all $x \in Q_i^*$ 
  with $d(x)<1/(3L\ul s^{2\xi}_1)$, 
  \begin{align} \label{H2}
  |\p^\be (T_{\hat x}^{p(x_i)} F - T_{\hat x}^{p(x) } F) (x)| 
  &\le  C \Big(\frac{3L  }{n}\Big)^{|\be|+1} \ul S^{2\xi}_{|\be|}  h_{\ul s^{2\xi}}(3 L d(x)).   
\end{align}  
\end{lemma}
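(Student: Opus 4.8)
The plan is to imitate the proof of \Cref{lem:H1}, now expanding the difference of the two Taylor polynomials of $F$ at the \emph{common} base point $\hat x$ but of the two different orders $p(x_i)$ and $p(x)$. Writing $P := T^{p(x_i)}_{\hat x}F - T^{p(x)}_{\hat x}F$, only the homogeneous parts of degree $m$ with $\min(p(x_i),p(x)) < m \le \max(p(x_i),p(x))$ survive, so that for $|\be| \le \max(p(x_i),p(x))$ (otherwise $\p^\be P(x)=0$ and there is nothing to prove)
\[
  \p^\be P(x) = \pm \sum_{m}\ \sum_{|\ga|=m,\, \ga \ge \be} \frac{F^\ga(\hat x)}{(\ga-\be)!}\,(x-\hat x)^{\ga-\be}.
\]
First I would insert the jet bound \eqref{jets1} in the form $|F^\ga(\hat x)| \le C\rh^{m}\, m!\, v^\xi_m$ and carry out the inner multi-index sum by the multinomial theorem, using $|x-\hat x| = d(x)$ and $\sum_j|x_j-\hat x_j| \le \sqrt n\, d(x)$. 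With $c := 2\rh\sqrt n$ this yields
\[
  |\p^\be P(x)| \le C\sum_{m} (2\rh)^{m}\,|\be|!\,v^\xi_m\,(\sqrt n\,d(x))^{m-|\be|} = C\,|\be|!\,(\sqrt n\,d(x))^{-|\be|}\sum_{m} v^\xi_m\,(c\,d(x))^{m}.
\]

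The second step localizes the range of $m$ and sums it geometrically. By \eqref{eq:save2} the orders are $p(x)=2q-1$ and $p(x_i)=2q_i-1$ with $q := \Ga_{\ul s^{2\xi}}(Ld(x))$ and $q_i := \Ga_{\ul s^{2\xi}}(Ld(x_i))$, so $m$ runs through $\{2q_-,\dots,2q_+-1\}$ with $q_\pm := \min/\max(q,q_i)$. By \eqref{cor48} and the monotonicity of $\Ga_{\ul s^{2\xi}}$ one gets $q_- \ge \Ga_{\ul s^{2\xi}}(3Ld(x)) =: r$ and $q_+ \le \Ga_{\ul s^{2\xi}}(\tfrac12 Ld(x)) =: q^*$. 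Now \eqref{eq:save} gives $v^\xi_m/v^\xi_{m-1} = \ul s^{2\xi}_{\lceil m/2\rceil}/\ul s^{2\xi}_{\lceil m/2\rceil-1}$, and since $\lceil m/2\rceil \le q^*$ throughout the range, \eqref{eq:Gamma} bounds this quotient by $2/(Ld(x))$. Hence $m \mapsto v^\xi_m(c\,d(x))^m$ is geometric with ratio $\le 2c/L = 4\rh\sqrt n/L \le 1/2$ once $L \ge 8\rh\sqrt n$, so the sum is dominated by twice its first term $v^\xi_{m_0}(c\,d(x))^{m_0}$, where $m_0 := \max(2q_-,|\be|)$ is the smallest surviving degree.

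The final and most delicate step extracts the two target factors $\ul S^{2\xi}_{|\be|}$ and $h_{\ul s^{2\xi}}(3Ld(x))$ from this leading term. Here I would exploit the strong log-convexity in \eqref{eq:save}, i.e.\ the single-index splitting $v^\xi_{m_0} \le \ul s^{2\xi}_{|\be|}\,\ul s^{2\xi}_{m_0-|\be|}$: the first factor produces $|\be|!\,\ul s^{2\xi}_{|\be|} = \ul S^{2\xi}_{|\be|}$ after the cancellation $(\sqrt n\,d(x))^{-|\be|}(c\,d(x))^{|\be|} = (2\rh)^{|\be|}$, and the remaining factor $\ul s^{2\xi}_{m_0-|\be|}(c\,d(x))^{m_0-|\be|}$ is compared with $h_{\ul s^{2\xi}}(3Ld(x)) = \ul s^{2\xi}_r(3Ld(x))^r$. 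Since the relevant indices lie in $[r,q^*]$, where by \eqref{eq:Gamma} the consecutive quotients of $k \mapsto \ul s^{2\xi}_k(3Ld(x))^k$ stay between $1$ and $6$, this controlled growth together with the minimality property of $h_{\ul s^{2\xi}}$ and the smallness of $c/(3L)$ absorbs all spurious powers of $6$, $c$ and $L$ once $L$ is large, leaving exactly one factor $h_{\ul s^{2\xi}}(3Ld(x))$. Recalling $(2\rh)^{|\be|} \le (3L/n)^{|\be|}$ for $L \ge \tfrac23 n\rh$ then gives \eqref{H2}. The main obstacle is precisely this extraction: one must treat uniformly the regime of small $|\be|$ and the regime where $|\be|$ is comparable to the Taylor orders (so that $m_0=|\be|$ and the splitting degenerates), in the latter case replacing the splitting by the identity $h_{v^\xi}(t)=h_{\ul s^{2\xi}}(t)^2$ coming from \eqref{eq:save} to recover the single factor $h_{\ul s^{2\xi}}(3Ld(x))$. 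This is the step that pins down $C_2(n)$ and the precise form $(3L/n)^{|\be|+1}$, and it is the direct analogue of the computation replacing \cite[Lemma 5.4]{RainerSchindl17}.
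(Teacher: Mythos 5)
Your first two steps are sound and run parallel to the paper's argument: the expansion of $T^{p(x_i)}_{\hat x}F-T^{p(x)}_{\hat x}F$ into homogeneous parts of degree $m\in\{2q_-,\dots,2q_+-1\}$, the bound $q_-\ge r:=\Ga_{\ul s^{2\xi}}(3Ld(x))$ and $q_+\le q^*:=\Ga_{\ul s^{2\xi}}(Ld(x)/2)$ via \eqref{cor48}, and the geometric summation with ratio $2c/L$ are all correct. The gap is in the extraction step, and it sits in the regime you do \emph{not} flag, namely small $|\be|$. The asymmetric splitting $v^\xi_{m_0}\le \ul s^{2\xi}_{|\be|}\,\ul s^{2\xi}_{m_0-|\be|}$ leaves a factor whose index $m_0-|\be|$ can be as large as $2q_-$, i.e.\ up to twice the minimizer $r$ of $k\mapsto \ul s^{2\xi}_k(3Ld(x))^k$, and in particular larger than $q^*$. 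Your controlled-growth claim (consecutive quotients of $k\mapsto\ul s^{2\xi}_k(3Ld(x))^k$ between $1$ and $6$) is valid only for $k\le q^*$; on $(q^*,2q^*]$ these quotients are unbounded, and $\ul s^{2\xi}_{m_0-|\be|}(cd(x))^{m_0-|\be|}$ need not be $O\big(K^{|\be|+1}h_{\ul s^{2\xi}}(3Ld(x))\big)$ for any constant $K$. Concretely, take $\ul s^{2\xi}$ with quotients $\ul s^{2\xi}_k/\ul s^{2\xi}_{k-1}=a$ for $k\le N$ and $=A$ for $k>N$, and $3Ld(x)\in(1/A,1/a)$: then $r=q_-=N$, $m_0=2N$ for $\be=0$, and $\ul s^{2\xi}_{2N}(cd(x))^{2N}/h_{\ul s^{2\xi}}(3Ld(x))=\big(A(cd(x))^2/(3Ld(x))\big)^N$, which is unbounded as $A/a\to\infty$. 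The asymmetric split loses exactly the factor $(A/a)^N=\ul s^{2\xi}_{2N}/(\ul s^{2\xi}_N)^2$ against the optimal middle split, and this is precisely why the matrix $\fV$ was built from the \emph{minimum} in \eqref{eq:save-1}.

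The repair is what the paper does. First use \eqref{eq:Gamma1} for the sequence $v^\xi$ at the argument $Ld(x)/2$ (legitimate for all surviving degrees, since they are $\le \Ga_{v^\xi}(Ld(x)/2)=2q^*$) to push the index of the dominant term all the way down to $2r$. Then split \emph{symmetrically}, $v^\xi_{2r}\le(\ul s^{2\xi}_r)^2$, so that, up to a factor $6^{-2r}\le1$, one obtains $\big(\ul s^{2\xi}_r(3Ld(x))^r\big)^2=h_{\ul s^{2\xi}}(3Ld(x))^2$, because $r$ is by definition the index realizing $h_{\ul s^{2\xi}}(3Ld(x))$. Finally, bound \emph{one} of the two copies by $\ul s^{2\xi}_{|\be|}(3Ld(x))^{|\be|}$ using the infimum property $h_{\ul s^{2\xi}}(3Ld(x))\le\ul s^{2\xi}_k(3Ld(x))^k$ for every $k$. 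This single inequality simultaneously produces the factor $\ul S^{2\xi}_{|\be|}/|\be|!$ and the power $(3Ld(x))^{|\be|}$ that cancels $d(x)^{-|\be|}$, for every $\be$ and uniformly in both of your regimes; no splitting index ever needs to be chosen equal to $|\be|$. With this modification your argument closes and is essentially the paper's proof.
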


\begin{proof}
Both $p(x_i)$ and $p(x)$ are majorized by $\Ga_{v^\xi}(Ld(x)/2)$, 
indeed, by \eqref{eq:save2}, \eqref{cor48}, and since $\Ga_{v^\xi}$ is decreasing, 
\[
  p(x_i) = 2  \Ga_{\ul s^{2\xi}}(L d(x_i)) -1 \le 2  \Ga_{\ul s^{2\xi}}(L d(x_i)) = \Ga_{v^\xi}(L d(x_i)) \le \Ga_{v^\xi}(L d(x)/2).
\]
So the degree of the polynomial $T_{\hat x}^{p(x_i) } F - T_{\hat x}^{p(x) } F$
is at most $\Ga_{v^\xi}(Ld(x)/2)$. 
The valuation of the polynomial is equal to $\min\{p(x_i),p(x)\}+1$ (unless $p(x_i) = p(x)$ in which case \eqref{H2} is trivial) and 
so at least $2  \Ga_{\ul s^{2\xi}}(3 L d( x)) =: 2 q$, by \eqref{cor48}. 
So if $H_2$ denotes the left-hand side of \eqref{H2},
then (see the calculation in \cite[(5.7)]{RainerSchindl17})
\begin{align*}
  H_2 
  &\le \frac{C |\be|!}{(n d(x))^{|\be|}} \sum_{j =2 q }^{\Ga_{v^\xi}(Ld(x)/2)}
  (2n^2 \rh  d(x))^{j} v^\xi_{j}. 
\end{align*}
By \eqref{eq:Gamma1}, 
$v^\xi_j (L d(x)/2)^j \le v^\xi_{2 q} (L  d(x)/2)^{2 q}$ for  $2q \le j \le \Ga_{v^\xi}(L  d(x)/2)$. By the definition of $q$,
  $h_{\ul s^{2\xi}}(3L d(x)) = \ul s^{2\xi}_{q} (3L d(x))^{q} \le \ul s^{2\xi}_{k} (3L d(x))^{k}$ for all 
  $k$.
With \eqref{eq:save-1} this leads to 
\begin{align*}
  H_2 
  &\le \frac{C |\be|!}{(n d(x))^{|\be|}} \sum_{j =2 q }^{\Ga_{v^\xi}(L  d(x)/2)}
  \Big(\frac{4 n^2 \rh}{L} \Big)^{j} v^\xi_{2 q} \Big(\frac{L  d(x)}2\Big)^{2 q}
  \\
  &\le \frac{C |\be|!}{(n d(x))^{|\be|}} \sum_{j =2 q }^{\Ga_{v^\xi}(L d(x)/2)}
  \Big(\frac{4n^2 \rh}{L } \Big)^{j} (\ul s^{2\xi}_{q})^2 \Big(\frac{L  d(x)}2\Big)^{2 q}
  \\
  &\le  
  C \Big(\frac{3L}{n}\Big)^{|\be|} |\be|!\, \ul s^{2\xi}_{|\be|}  h_{\ul s^{2\xi}}(3L d(x)) 
  \sum_{j =2 q }^{\Ga_{v^\xi}(L  d(x)/2)}
  \Big(\frac{4n^2  \rh}{L } \Big)^{j}.   
\end{align*}
If we choose $L \ge 8n^2  \rh$, then the sum is bounded by $2$, 
and \eqref{H2} follows.
\end{proof}

$\bullet$ Assume that $L$ is chosen such that 
	\begin{equation} \label{eq:L}
		L > \max\{C_0,C_1,C_2\} \, \rh	
	\end{equation}
	so that \eqref{prop91}, \eqref{prop92}, \eqref{H1}, and \eqref{H2} are valid.
	Recall that $\fW$ denotes the weight matrix associated with $\om$. 
	The next lemma is a substitute for the claim in the proof of Theorem 5.5 in \cite{RainerSchindl17}.
	
\begin{lemma} 
There exist constants $K_j=K_j(n,\om)$, $j = 1,2,3$, such that the following holds.
  If $p = K_1 L$ and $L>K_2 \rh$, then there exist a	
  weight sequence $W \in \fW$ and a constant 
  $M_1= M_1(n,\om,L)>0$ such that 
	for all $x \in \R^n \setminus E$ with $d(x) < \min \{r_0/(3B_1), 1/(3L\ul s^{2\xi}_1)\}$ and all $\al \in \N^n$,
  \begin{equation} \label{eqclaim2}
    |\p^\al (f - T_{\hat x}^{p(x) } F) (x)| \le C M_1^{|\al|+1}  W_{|\al|} h_{\ul s^{4\xi}}(K_3 L d(x)), 
  \end{equation}
  where $C$ and $\rh$ are the constants from \eqref{jets1} and \eqref{jets2} (and $B_1$ is the universal constant from \cite[Lemma 4.7]{RainerSchindl17}).
\end{lemma}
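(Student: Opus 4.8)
The plan is to exploit that $\{\vh_{i,p}\}$ is a partition of unity on $\R^n\setminus E$, so that on this set $f-T_{\hat x}^{p(x)}F=\sum_i\vh_{i,p}\,(T_{\hat x_i}^{p(x_i)}F-T_{\hat x}^{p(x)}F)$. Differentiating by the Leibniz rule, and using that for fixed $x$ the cubes $Q_i^*$ containing $x$ are finite in number with dimension-bounded overlap (\cite[Lemma 4.7]{RainerSchindl17}) while \eqref{cor48} ties $d(x_i)$ to $d(x)$, it suffices to bound, for each such $i$ and each $\ga\le\al$, the product $|\p^{\al-\ga}\vh_{i,p}(x)|\,|\p^\ga(T_{\hat x_i}^{p(x_i)}F-T_{\hat x}^{p(x)}F)(x)|$. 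I would insert the intermediate polynomial $T_{\hat x}^{p(x_i)}F$ and split the Taylor difference into the base-point change $T_{\hat x_i}^{p(x_i)}F-T_{\hat x}^{p(x_i)}F$ and the degree change $T_{\hat x}^{p(x_i)}F-T_{\hat x}^{p(x)}F$, governed by \Cref{lem:H1} and \Cref{lem:H2}. As $d(x_i)\le 3d(x)$ and $h_{\ul s^{2\xi}}$ is increasing, both collapse to $|\p^\ga(\cdots)(x)|\le C(3L)^{|\ga|+1}\ul S^{2\xi}_{|\ga|}\,h_{\ul s^{2\xi}}(3Ld(x))$, a bound that vanishes once $|\ga|$ exceeds the degree of the difference.

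The decisive step --- the one that bypasses assumption \eqref{intro:good} --- is the interaction of this $h$-factor with the cutoff derivatives. By \Cref{prop:strongmatrix} I would write $h_{\ul s^{2\xi}}(3Ld(x))\le h_{\ul s^{4\xi}}(3HLd(x))^2$ and reserve one factor for the conclusion, spending the other to neutralise the negative power of $d(x_i)$ produced when \cite[Proposition 4.9]{RainerSchindl17} is invoked. That proposition provides cutoff estimates of the form $|\p^{\al-\ga}\vh_{i,p}(x)|\le(B_1\max\{|\al-\ga|,p\}/(r_0 d(x_i)))^{|\al-\ga|}$; combined with $h_{\ul s^{4\xi}}(3HLd(x))\le\ul s^{4\xi}_{|\al-\ga|}(3HLd(x))^{|\al-\ga|}$ and \eqref{cor48}, the factor $d(x_i)^{-|\al-\ga|}$ cancels and the cutoff contributes $(\mathrm{const}\cdot L\max\{|\al-\ga|,p\})^{|\al-\ga|}\ul s^{4\xi}_{|\al-\ga|}$. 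For $|\al-\ga|\le p$ this is $(\mathrm{const}\,L^2)^{|\al-\ga|}\ul s^{4\xi}_{|\al-\ga|}$ after setting $p=K_1L$; for $|\al-\ga|>p$ the extra $|\al-\ga|^{|\al-\ga|}$ turns $\ul s^{4\xi}_{|\al-\ga|}$ into $\ul S^{4\xi}_{|\al-\ga|}$ up to a geometric constant.

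It then remains to collapse the weights onto a single member of $\fW$. I would raise the Taylor weight to index $4\xi$ via \eqref{eq:save1} (giving $\ul S^{2\xi}_{|\ga|}\le H^{|\ga|}\ul S^{4\xi}_{|\ga|}$), so that each term carries $\ul s^{4\xi}_{|\al-\ga|}\,\ul S^{4\xi}_{|\ga|}$ (or $\ul S^{4\xi}_{|\al-\ga|}\,\ul S^{4\xi}_{|\ga|}$ in the high-order range). The super-multiplicativity of the log-convex sequence $\ul s^{4\xi}$ gives $\ul s^{4\xi}_{|\al-\ga|}\ul s^{4\xi}_{|\ga|}\le\ul s^{4\xi}_{|\al|}$, and a routine multinomial estimate for $\sum_{\ga\le\al}\binom{\al}{\ga}|\ga|!$ (resp.\ $\sum_{\ga\le\al}\binom{\al}{\ga}|\al-\ga|!\,|\ga|!$) bounds the $\ga$- and $i$-sums by $(\mathrm{const})^{|\al|}\ul S^{4\xi}_{|\al|}$, which by \eqref{eq:order} is at most $(\mathrm{const})^{|\al|}W^{4\xi}_{|\al|}$. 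Absorbing every geometric constant, the dimensional overlap count, and $p=K_1L$ into $M_1$ (of order $L^2$), and setting $W:=W^{4\xi}$ and $K_3:=3H$, yields \eqref{eqclaim2}; the threshold $L>K_2\rh$ is what makes \Cref{lem:H1} and \Cref{lem:H2} applicable.

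The hard part is the bookkeeping of the two $\xi$-indices so that exactly one factor $h_{\ul s^{4\xi}}(K_3Ld(x))$ survives. One cannot use the Taylor factor $h_{\ul s^{2\xi}}(3Ld(x))$ both to kill the cutoff's $d(x_i)^{-|\al-\ga|}$ \emph{and} to supply the smallness demanded on the right-hand side; the squaring inequality of \Cref{prop:strongmatrix} is precisely what provides a second, reserved copy of $h$ at the shifted index $4\xi$, and this is the device that replaces \eqref{intro:good} in the original argument. A secondary subtlety is that the analytic-type cutoff bound degrades for orders $|\al-\ga|>p$, where one must check that the growth $|\al-\ga|^{|\al-\ga|}$ recombines with $\ul s^{4\xi}$ to reproduce $\ul S^{4\xi}\le W^{4\xi}$ rather than an uncontrolled weight; obtaining uniform constants across both ranges is what pins down $K_1,K_2,K_3$.
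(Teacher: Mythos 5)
Your overall architecture is the paper's: Leibniz rule applied to $f-T_{\hat x}^{p(x)}F=\sum_i\vh_{i,p}(T_{\hat x_i}^{p(x_i)}F-T_{\hat x}^{p(x)}F)$, the splitting of the Taylor difference handled by \Cref{lem:H1} and \Cref{lem:H2}, the overlap count from \cite[Lemma 4.7]{RainerSchindl17}, and the squaring inequality $h_{\ul s^{2\xi}}(t)\le h_{\ul s^{4\xi}}(Ht)^2$ of \Cref{prop:strongmatrix} used to reserve one surviving copy of $h_{\ul s^{4\xi}}$. However, there is a genuine gap in the key technical input: the cutoff estimate you attribute to \cite[Proposition 4.9]{RainerSchindl17}, namely $|\p^{\al-\ga}\vh_{i,p}(x)|\le\bigl(B_1\max\{|\al-\ga|,p\}/(r_0 d(x_i))\bigr)^{|\al-\ga|}$, is not what that proposition provides, and as stated for all orders it cannot hold: for $|\al-\ga|>p$ it is an analytic-type bound $\sim C^{|\be|}|\be|!\,d(x_i)^{-|\be|}$, and a compactly supported smooth function satisfying such bounds at every order would be real analytic, hence zero. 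Your remark that for high orders the factor $|\al-\ga|^{|\al-\ga|}$ ``recombines with $\ul s^{4\xi}$ to reproduce $\ul S^{4\xi}$'' does not repair this, because the premise itself fails there; the derivatives of an ultradifferentiable cutoff must be controlled by a weight sequence from $\fW$, not by $|\be|^{|\be|}$.

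The actual estimate, recorded in \eqref{eq:claim2}, is $|\vh^{(\be)}_{i,p}(x)|\le M\,W_{|\be|}\,\Pi(p,x)$ with $W=W(p)\in\fW$ and a blow-up factor $\Pi(p,x)$ that is \emph{independent of the order} $\be$ and satisfies \eqref{eq:Pi}, i.e.\ $\Pi(p,x)\le\bigl(e/h_{\ul s^{\et}}(c\,p\,d(x))\bigr)^{A_1(n)B/p}$. Consequently the role of the choice $p=K_1L$ is different from what you describe: it is not used to cancel a power $d(x_i)^{-|\al-\ga|}$ (there is none), but to make the exponent $A_1(n)B/p\le 1$ and the argument $c\,p\,d(x)\ge 3HLd(x)$, so that $\Pi(p,x)\le e/h_{\ul s^{4\xi}}(3HLd(x))$ is cancelled by exactly one of the two copies produced by the squaring inequality, the other surviving as $h_{\ul s^{4\xi}}(K_3Ld(x))$. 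With the correct form of the cutoff bound, the $\ga$-sum also simplifies: the weight $W_{|\al-\ga|}$ from the cutoff and $\ul S^{2\xi}_{|\ga|}\le W_{|\ga|}$ combine into $W_{|\al|}$ via log-convexity and a binomial estimate, with no need for your two-range case distinction. You correctly identified the squaring of $h$ as the device that replaces \eqref{intro:good}, but the mechanism pairing it with the partition of unity needs to be rebuilt on the estimate \eqref{eq:claim2}.
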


\begin{proof}
	By the Leibniz rule,
\begin{align}
  \p^\al& (f - T_{\hat x}^{p(x) } F) (x) 
  =
  \sum_{\be \le \al} \binom{\al}{\be} \sum_i \vh_{i,p}^{(\al-\be)}(x) \, 
  \p^\be (T_{\hat x_i}^{p(x_i) } F - T_{\hat x}^{p(x) } F) (x). \label{Leibniz}
\end{align}
Now \eqref{H1} and \eqref{H2} imply, that for $x \in Q_i^*$ with $d(x)<1/(3L\ul s^{2\xi}_1)$,
\begin{align} \label{eq:claim1}
  |\p^\be (T_{\hat x_i}^{p(x_i)} F - T_{\hat x}^{p(x)} F) (x)|
  \le C    
    (6 L)^{|\be|+1}  \ul S^{2\xi}_{|\be|} \, h_{\ul s^{2\xi} }(3L d(x)).
\end{align}
As in \cite{RainerSchindl17} we conclude (using \cite[Proposition 4.9]{RainerSchindl17}) that there exist $W = W(p) \in \fW$ and $M = M(p)>0$ such that 
 for all $i \in \N$, all $x \in \R^n \setminus E$ with $d(x) <  r_0/(3B_1)$, and all $\be \in \N^n$,
\begin{equation} \label{eq:claim2}
	|\varphi^{(\be)}_{i,p}(x)|\le  M W_{|\be|}\, \Pi(p,x)	
\end{equation}
where, by \cite[Corollary 3.11]{RainerSchindl17},
\begin{align} \label{eq:Pi}
  \Pi(p,x)&=\exp\Big(\frac{A_1(n)}{p}\si^{\star}\Big(\frac{b_1 p}{9 A_2(n)} d(x)\Big)\Big) \notag
  \\
  &\le \Big(\frac{e}{h_{\ul s^{\et}}(\frac{b_1 p d(x)}{9 A_2(n) B})} \Big)^{\frac{A_1(n) B}{p}}, \quad \text{ for some $B\ge 1$ and all } \et>0. 
\end{align}
($b_1$ is the universal constant from \cite[Lemma 4.7]{RainerSchindl17} and $A_1(n) \le A_2(n)$ are constants depending only on $n$.)
By \eqref{eq:order}, we may assume that 
$\ul S^{2\xi} \le W$. 
Then, by \eqref{Leibniz}, \eqref{eq:claim1}, \eqref{eq:claim2}, and \cite[Lemma 4.7]{RainerSchindl17}, 
for $x \in \R^n \setminus E$ with $d(x) <  \min \{r_0/(3B_1), 1/(3L\ul s^{2\xi}_1)\}$,  
\begin{align*}
  |\p^\al& (f - T_{\hat x}^{p(x) } F) (x)| 
  \\
  &\le
  \sum_{\be \le \al} \frac{\al!}{\be!(\al-\be)!} 
  \cdot 12^{2n} \cdot M W_{|\al|-|\be|}\Pi(p,x)
  \cdot C    (6L)^{|\be|+1}  \ul S^{2\xi}_{|\be|} \, h_{\ul s^{2\xi}}(3L d(x))
  \\
  &\le  12^{2n}C  M
  \Big(\sum_{j=0}^{|\al|} \frac{|\al|!\, n^{|\al|+j}}{j!(|\al|-j)!}  
    (6L)^{j+1} W_{|\al|-j}
        \ul S^{2\xi}_{j}\Big) \, \Pi(p,x)
  \, h_{\ul s^{2\xi}}(3L d(x))    
  \\
  &\le 6\cdot 12^{2n} L C  M n^{|\al|} W_{|\al|}  
  \Big(\sum_{j=0}^{|\al|} \frac{|\al|!\, }{j!(|\al|-j)!}   
      (6 L n)^{j}\Big) \, 
      \Pi(p,x)
  \, h_{\ul s^{2\xi}}(3L d(x))
  \\
  &=  6\cdot 12^{2n} L C  M (n (1 + 6L n ))^{|\al|} W_{|\al|}  
  \Pi(p,x)
  \, h_{\ul s^{2\xi}}(3L d(x)).           
\end{align*}
By \Cref{prop:strongmatrix}, there is $H\ge 1$ (independent of $\xi$) 
such that 
$h_{\ul s^{2\xi}}(t) \le h_{\ul s^{4\xi}}(Ht)^2$ for $t>0$.
Let us choose $L$ according to \eqref{eq:L} and such that $p := 27 \,A_2(n) B H   L/b_1 \ge A_1(n) B$ is an integer. 
Then, by \eqref{eq:Pi} and  since $h_{\ul s^{4\xi}} \le 1$, 
\begin{align*}
	\Pi(p,x) \, h_{\ul s^{2\xi}}(3L d(x))
	\le \frac{e\, h_{\ul s^{2\xi}}(3L d(x))}{h_{\ul s^{4\xi}}(3  H  L    d(x))} 
	\le e\, h_{\ul s^{4\xi}}(3  H  L    d(x))
\end{align*}
 and we obtain \eqref{eqclaim2}. 
 (Note that $M$ depends on $p$, and hence on $L$, which results in the non-explicit dependence of $M_1$.)
\end{proof}

$\bullet$ Let us finish the proof of \Cref{mainadd}. By \eqref{prop91} and 
 \eqref{eqclaim2}, for all $x \in \R^n \setminus E$ with $d(x) <  \min \{r_0/(3B_1), 1/(3L\ul s^{2\xi}_1)\}$ and all $\al \in \N^n$, 
 \begin{align}
    |f^{(\al)}(x)| 
    &\le 
   |(T_{\hat x}^{p(x)} F)^{(\al)}(x)| + |\p^\al (f - T_{\hat x}^{p(x) } F) (x)|
    \label{final}
   \le 
    C M^{|\al|+1} W_{|\al|}
 \end{align}
 for a suitable constant $M=M(n,\om,L)$.

 Let us fix a point $a \in E$ and $\al\in \N^n$. 
 Since $\Ga_{\ul s^{2\xi}}(t) \to \infty$ as $t \to 0$, 
 we have $|\al| < p(x)$ if $x \in \R^n \setminus E$ is 
 sufficiently close to $a$. Thus, as $x \to a$, 
 \begin{align*}
   &|f^{(\al)}(x) - F^{\al}(a)|
   \\
   &\le 
   |\p^\al (f - T_{\hat x}^{p(x) } F) (x)| + 
   |(T_{\hat x}^{p(x)}F)^{(\al)}(x)-F^\al(\hat x)| + |F^\al(\hat x) - F^\al(a)| 
   \\
   & = O(h_{\ul s^{4\xi}}(K_3 L d(x))) + O(d(x)) + O(|\hat x - a|),
 \end{align*}
 by \eqref{jets2}, \eqref{prop92}, and \eqref{eqclaim2}. 
 Hence $f^{(\al)}(x) \to  F^{\al}(a)$ as $x \to a$.
 We may conclude that $f \in C^\infty(\R^n)$ and extends $F$.
 After multiplication with a suitable cut-off function of class $\cB^{\{\om\}}$ with support in 
 $\{x : d(x) <  \min \{r_0/(3B_1), 1/(3L\ul s^{2\xi}_1)\}\}$,
 we find that $f \in \cB^{\{\om\}}(\R^n)$ thanks to \eqref{jets1}, \eqref{final}, and \cite[Lemma 2.4(5)]{RainerSchindl17}.
 The proof of \Cref{mainadd} is complete.

\section{Concave, good, and strong weight functions} \label{sec:questions}

In \cite[Definition 3.5]{RainerSchindl17} we called a weight function $\si$ \emph{good} if its associated weight matrix $\fS$ satisfies 
\eqref{intro:good}. A non-quasianalytic weight function $\om$ is called \emph{strong} if there is a constant $C>0$ such that
\[
\int_1^\infty \frac{\om(tu)}{u^2}\,du \le C \om(t) + C, \quad \text{ for all } t>0. 
\]
Otherwise put, $\om$ is strong if and only if it is equivalent to the concave weight function 
$\kappa=\kappa(\om)$ defined in \eqref{kappa}.
In \cite{RainerSchindl17} we asked the following questions:
\begin{description}
 	\item[Question 3.21] \it Is every concave weight function equivalent to a good one?
 	\item[Question 5.11] \it Is every strong weight function equivalent to a good one?
 \end{description} 
We will give partial answers to these questions and reveal some related connections in \Cref{thm:omegachar} below. 

In \cite{RainerSchindl17} it was important that the associated weight matrix \emph{itself} 
satisfies \eqref{intro:good} as explained after the proof of \Cref{prop:strongmatrix}.  
Since we could overcome this problem (by introducing $\fV = \{V^\xi\}$), it is more natural to allow for a wider concept of goodness.
For completeness we will also treat the Beurling case.
A weight function $\om$ is called \emph{R-good} if there exists a weight matrix $\fM$ satisfying 
\begin{equation} \label{eq:goodR}
 	\A M \in \fM \E N \in \fM \E C\ge 1 \A 1 \le j \le k : \frac{\mu_j}{j} \le C \frac{\nu_k}{k} 
 \end{equation} 
such that $\cB^{\{\om\}} = \cB^{\{\fM\}}$. Recall that $\mu_k := M_k/M_{k-1}$ and $\nu_k := N_k/N_{k-1}$.  
Similarly, $\om$ is called  
\emph{B-good} if there exists a weight matrix $\fM$ satisfying
\begin{equation} \label{eq:goodB}
 	\A N \in \fM \E M \in \fM \E C\ge 1 \A 1 \le j \le k : \frac{\mu_j}{j} \le C \frac{\nu_k}{k} 
 \end{equation} 
 such that $\cB^{(\om)} = \cB^{(\fM)}$.

The next lemma, which is inspired by 
\cite[Proposition 4.15]{Jimenez-GarridoSanz2016}, 
implies that for any weight matrix $\fM$ satisfying \eqref{eq:goodR} (resp.\ \eqref{eq:goodB}) there is a 
weight matrix $\fS$ consisting of strongly log-convex weight sequences such that $\cB^{\{\fM\}} = \cB^{\{\fS\}}$ 
(resp.\ $\cB^{(\fM)} = \cB^{(\fS)}$).

\begin{lemma} \label{lem:good}
  Assume that $1= \mu_0 \le \mu_1 \le \cdots$ and $1= \nu_0 \le \nu_1 \le \cdots$  
  satisfy
  \[
    \E C>0 : \frac{\mu_j}{j} \le C \frac{\nu_k}k, \quad \text{ for all } j \le k.  
  \]
  Then the sequence $\tilde \nu$ defined by 
  \[
     \frac{\tilde \nu_k}k := \inf_{\ell \ge k} \frac{\nu_\ell}\ell, \quad \tilde \nu_0 :=1, 
  \]
  is such that $\tilde \nu_k/k$ is increasing and $C^{-1} \mu \le \tilde \nu \le \nu$.  \qed
\end{lemma}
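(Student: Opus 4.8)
The plan is to recast the definition in terms of the normalized ratio sequence $a_\ell := \nu_\ell/\ell$ (for $\ell \ge 1$), so that $\tilde\nu_k/k = \inf_{\ell \ge k} a_\ell$ for $k \ge 1$ while $\tilde\nu_0 = 1$; all three assertions then drop out by unwinding this infimum. Two preliminary remarks streamline the argument. First, we may assume $C \ge 1$: if the hypothesis holds for some $C_0 > 0$ it holds a fortiori for $C := \max\{C_0,1\}$, and this is the constant we carry into the conclusion (enlarging $C$ only weakens the lower bound $C^{-1}\mu \le \tilde\nu$). Second, the defining infimum is a positive real number: the hypothesis with $j = 1$ gives $a_\ell \ge \mu_1/C \ge 1/C > 0$ for every $\ell$, while the choice $\ell = k$ shows the infimum is finite; hence $\tilde\nu$ is a well-defined positive sequence.

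The monotonicity of $k \mapsto \tilde\nu_k/k$ is immediate from the nesting of index sets: for $1 \le k_1 \le k_2$ one has $\{\ell : \ell \ge k_2\} \subseteq \{\ell : \ell \ge k_1\}$, and an infimum over a smaller set can only increase, so $\tilde\nu_{k_1}/k_1 \le \tilde\nu_{k_2}/k_2$.

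For the two-sided bound I argue componentwise. The upper bound comes from choosing $\ell = k$ in the infimum, which gives $\tilde\nu_k/k \le a_k = \nu_k/k$ and hence $\tilde\nu_k \le \nu_k$ for $k \ge 1$, together with $\tilde\nu_0 = 1 = \nu_0$. For the lower bound I invoke the hypothesis with $j = k$: for every $\ell \ge k$ it yields $\mu_k/k \le C\,\nu_\ell/\ell = C\,a_\ell$, and taking the infimum over $\ell \ge k$ gives $\mu_k/k \le C\,\tilde\nu_k/k$, that is $C^{-1}\mu_k \le \tilde\nu_k$ for $k \ge 1$; the remaining case $k = 0$ reads $C^{-1} \le 1 = \tilde\nu_0$, which holds because $C \ge 1$.

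There is no substantive obstacle here; the lemma is a direct verification. The only points demanding any attention are the bookkeeping at $k = 0$ (where the ratios $a_\ell$ are undefined and the normalization $\tilde\nu_0 := 1$ must be supplied by hand) and the preliminary observation that the defining infimum is finite and positive. Everything else is a one-line consequence of the definition of $\tilde\nu$.
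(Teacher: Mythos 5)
Your verification is correct and complete; the paper omits the proof entirely (the lemma is stated with a terminal \qed as an immediate consequence of the definitions), and your argument is exactly the intended direct unwinding of the infimum. Your two bookkeeping observations --- normalizing $C\ge 1$ so that the $k=0$ case $C^{-1}\mu_0\le\tilde\nu_0$ holds, and noting that the infimum is finite and positive --- are sensible touches that the paper's implicit argument glosses over.
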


The next two corollaries are immediate from \Cref{lem:good} 
and results of \cite{RainerSchindl12}, \cite{RainerSchindl14}, and \cite{RainerSchindl16a}.

\begin{corollary}
	Let $\fM$ be a weight matrix
	with the property that for all $M \in \fM$ there is $N \in \fM$ such that $(M_{k+1}/N_k)^{1/k}$ is bounded. 
	Consider the following conditions:
	\begin{enumerate}
		\item[(a)] $\fM$ satisfies \eqref{eq:goodR}.
		\item[(b)] There is a 
		weight matrix $\fS$ consisting of strongly log-convex weight sequences such that $\cB^{\{\fM\}} = \cB^{\{\fS\}}$.
		\item[(c)] $\cB^{\{\fM\}}$ is stable under composition. 
		\item[(d)] $\forall M \in \fM \E N \in \fM \E C>0 \A j\le k : m_j^{1/j} \le C\, n_k^{1/k}$.
	\end{enumerate}
	Then \thetag{a} $\Leftrightarrow$ \thetag{b} $\Rightarrow$ \thetag{c} $\Leftrightarrow$ \thetag{d}. 
	If additionally $\fM$ satisfies 
	\begin{equation} \label{mgR}
    \forall M \in \fM \E N \in \fM  : \mu_k \lesssim N_k^{1/k},
  \end{equation}
	then all four conditions are equivalent.
\end{corollary}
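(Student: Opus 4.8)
The plan is to keep in mind that (a) and (d) are properties of the matrix $\fM$ (comparisons of the quotients $\mu_k = M_k/M_{k-1}$, respectively of the roots $m_k^{1/k}=(M_k/k!)^{1/k}$), whereas (b) and (c) are properties of the class $\cB^{\{\fM\}}$; the whole argument consists in passing between these two levels using \Cref{lem:good} and the comparison results of \cite{RainerSchindl12, RainerSchindl14, RainerSchindl16a}. The one computation to record at the outset is $\mu_k/k = m_k/m_{k-1}$, where $m=M/k!$, so that \eqref{eq:goodR} is exactly the requirement $\A M \E N \E C \A j\le k:\ m_j/m_{j-1}\le C\,n_k/n_{k-1}$, and strong log-convexity of $S$ means precisely that $s_k/s_{k-1}$ is increasing (with $s=S/k!$), whence $s_k^{1/k}$ is increasing and $s_k^{1/k}\le s_k/s_{k-1}$.

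For (a) $\Rightarrow$ (b) I would argue directly from \Cref{lem:good}. Given $M\in\fM$, choose $N\in\fM$ and $C$ as in \eqref{eq:goodR} and apply \Cref{lem:good} to $\mu,\nu$; this produces $\tilde\nu$ with $\tilde\nu_k/k$ increasing and $C^{-1}\mu\le\tilde\nu\le\nu$. Letting $\tilde N$ be the sequence with quotients $\tilde\nu$ and $\tilde N_0=1$, strong log-convexity of $\tilde N$ is immediate, while multiplying out gives $C^{-k}M_k\le\tilde N_k\le N_k$; hence $\cB^{\{M\}}\subseteq\cB^{\{\tilde N\}}\subseteq\cB^{\{N\}}$, and collecting the $\tilde N$ into a matrix $\fS$ yields $\cB^{\{\fM\}}=\cB^{\{\fS\}}$ with $\fS$ strongly log-convex, by the Roumieu comparison of \cite{RainerSchindl12}. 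The converse (b) $\Rightarrow$ (a) is the delicate direction and the step I expect to be the main obstacle: the equality $\cB^{\{\fM\}}=\cB^{\{\fS\}}$ gives, via \cite{RainerSchindl12}, only sequence-level dominations of the form $M_k\le A^k S_k$ and $S_k\le B^k N_k$ with $S\in\fS$ strongly log-convex, and these must be promoted to the quotient estimate \eqref{eq:goodR} for $\fM$. This is precisely where the standing hypothesis that $(M_{k+1}/N_k)^{1/k}$ be bounded enters, since it controls the one-step-ahead growth of $M$ in terms of $N$ and so bridges the gap between sequence- and quotient-level comparisons.

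For (c) $\Leftrightarrow$ (d) I would simply invoke the characterization of stability under composition in terms of the matrix proved in \cite{RainerSchindl14, RainerSchindl16a}: $\cB^{\{\fM\}}$ is stable under composition if and only if $\fM$ satisfies (d). To obtain (b) $\Rightarrow$ (c), note that the strongly log-convex matrix $\fS$ from (b) trivially satisfies (d) (take $M=N=S$ and $C=1$, using that $s_k^{1/k}$ is increasing); hence $\cB^{\{\fS\}}=\cB^{\{\fM\}}$ is stable under composition, i.e.\ (c) holds, and applying (c) $\Leftrightarrow$ (d) back to $\fM$ also yields (d). This establishes (a) $\Leftrightarrow$ (b) $\Rightarrow$ (c) $\Leftrightarrow$ (d).

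Finally, under the extra hypothesis \eqref{mgR} I would close the loop by proving (d) $\Rightarrow$ (a). Given $M\in\fM$, \eqref{mgR} furnishes $P\in\fM$ with $\mu_j\lesssim P_j^{1/j}$; since $(j!)^{1/j}\asymp j$, this reads $\mu_j/j\lesssim p_j^{1/j}$. Applying (d) to $P$ gives $Q\in\fM$ and $C$ with $p_j^{1/j}\le C\,q_k^{1/k}$ for $j\le k$, and the log-convexity of $Q$ (again with $(k!)^{1/k}\asymp k$) gives $q_k^{1/k}\lesssim q_k/q_{k-1}=\nu_k/k$, where now $\nu$ denotes the quotients of $Q$. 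Chaining these three estimates yields $\mu_j/j\lesssim \nu_k/k$ for all $j\le k$, which is \eqref{eq:goodR} with $N=Q$; hence (a). The recurring difficulty in every part is the passage between root or sequence quantities (which equality of classes and condition (d) naturally provide) and quotient quantities (which \eqref{eq:goodR} demands), and the boundedness hypothesis and \eqref{mgR} are exactly the devices that make this passage possible.
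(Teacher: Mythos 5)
Your overall architecture matches the paper's intent (the paper prints no proof, declaring the corollary immediate from \Cref{lem:good} and the results of \cite{RainerSchindl12}, \cite{RainerSchindl14}, \cite{RainerSchindl16a}), and three of the four arrows are handled correctly: (a) $\Rightarrow$ (b) via \Cref{lem:good}; (b) $\Rightarrow$ (c) $\Leftrightarrow$ (d) via the composition results; and (d) together with \eqref{mgR} implies (a) by the chain $\mu_j/j \lesssim p_j^{1/j} \le C\, q_k^{1/k} \lesssim \nu_k/k$, which is correct --- the last step being $Q_k^{1/k} \le \nu_k$ (log-convexity of $Q$ with $Q_0=1$) divided by $(k!)^{1/k} \asymp k$, as your parenthetical indicates. (Two small repairs in (a) $\Rightarrow$ (b): the family of sequences $\tilde N$ produced by \Cref{lem:good} need not be totally ordered, nor satisfy $\tilde N_k \ge k!$ on the nose; both are fixed by standard normalizations.)

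The genuine gap is (b) $\Rightarrow$ (a), which you yourself call ``the main obstacle'' and then do not prove: you assert that the hypothesis that $(M_{k+1}/N_k)^{1/k}$ be bounded ``bridges the gap between sequence- and quotient-level comparisons'' without showing how. This is not a routine step. Equality of the Roumieu classes yields only estimates of the form $M_k \le A^k S_k$ and $S_k \le B^k N_k$, and dividing such inequalities to estimate $\mu_j = M_j/M_{j-1}$ leaves a factor of the form $A^j$, which is fatal: \eqref{eq:goodR} demands a \emph{uniform} constant $C$, not $C^j$. To bound $\mu_j$ by quotients of a strongly log-convex $S$ with a uniform constant one needs a \emph{lower} bound $M_j \ge a^j S'_j$ for some $S' \in \fS$ (then $\mu_j^j \le M_{2j}/M_j \le (A^2/a)^{j}\,\sigma_{2j}^{\,j}$ gives $\mu_j \lesssim \sigma_{2j}$, which is the kind of estimate \eqref{eq:goodR} requires); but equality of Roumieu classes only says that every $S \in \fS$ is dominated by \emph{some} element of $\fM$, not that the given $M$ dominates some element of $\fS$. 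Producing this lower bound --- using the total order of $\fM$, the standing derivation-closedness hypothesis, or a further reduction --- is exactly the missing content, and the naive use of that hypothesis (e.g.\ $\mu_{j+1} \le D^{j+1} M^{(1)}_j/M_j$) again leaves an exponential factor. Until this direction is supplied, the equivalence (a) $\Leftrightarrow$ (b) is not established.
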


\begin{corollary}
	Let $\fM$ be a weight matrix 
  with the property that for all $N \in \fM$ there is $M \in \fM$ such that $(M_{k+1}/N_k)^{1/k}$ is bounded.
	Consider the following conditions:
	\begin{enumerate}
		\item[(a)] $\fM$ satisfies \eqref{eq:goodB}.
		\item[(b)] There is a 
		weight matrix $\fS$ consisting of strongly log-convex weight sequences such that $\cB^{(\fM)} = \cB^{(\fS)}$.
		\item[(c)] $\cB^{(\fM)}$ is stable under composition. 
		\item[(d)] $\forall N \in \fM \E M \in \fM \E C>0 \A j\le k : m_j^{1/j} \le C\, n_k^{1/k}$.
	\end{enumerate}
	Then \thetag{a} $\Leftrightarrow$ \thetag{b} $\Rightarrow$ \thetag{c} $\Leftrightarrow$ \thetag{d}. 
	If additionally $\fM$ satisfies 
	\begin{equation} \label{mgB}
    \forall N \in \fM \E M \in \fM  : \mu_k \lesssim N_k^{1/k},  
  \end{equation}
	then all four conditions are equivalent.
\end{corollary}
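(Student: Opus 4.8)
The plan is to treat this as the Beurling (projective-limit) counterpart of the preceding corollary, so that the same three-part scheme applies once the orientation of every quantifier over the matrix is reversed. The one genuinely constructive step, namely (a)$\Rightarrow$(b), is carried by \Cref{lem:good}; the remaining implications are the Beurling analogues of the sequence-space results of \cite{RainerSchindl12}, \cite{RainerSchindl14}, and \cite{RainerSchindl16a}, and I would invoke them in that form. Throughout I write $A \preceq B$ for $A_k \le C^k B_k$ with $C$ independent of $k$, and recall that $\cB^{(\fM)} = \cB^{(\fS)}$ is equivalent to the two dominations $\forall S \in \fS\ \exists M \in \fM: M \preceq S$ and $\forall M \in \fM\ \exists S \in \fS: S \preceq M$.

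For (a)$\Rightarrow$(b) I would fix $N \in \fM$ and use \eqref{eq:goodB} to obtain $M \in \fM$ and $C \ge 1$ with $\mu_j/j \le C\,\nu_k/k$ for all $j \le k$. Applying \Cref{lem:good} to the pair $(\mu,\nu)$ yields $\tilde\nu$ with $\tilde\nu_k/k$ increasing and $C^{-1}\mu \le \tilde\nu \le \nu$. The sequence $\tilde N$ with quotients $\tilde\nu_k$ then has $\tilde N_k/k!$ log-convex, since $\tilde\nu_k/k = (\tilde N_k/k!)/(\tilde N_{k-1}/(k-1)!)$ is increasing; that is, $\tilde N$ is strongly log-convex, and multiplying the quotient bounds gives $C^{-k} M_k \le \tilde N_k \le N_k$. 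Setting $\fS := \{\tilde N : N \in \fM\}$, both Beurling dominations are immediate: for $\tilde N \in \fS$ the partner $M$ satisfies $M \preceq \tilde N$, while for any $M' \in \fM$ the sequence $\widetilde{M'} \in \fS$ satisfies $\widetilde{M'} \le M'$, so $\widetilde{M'} \preceq M'$.

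For the converse and the remaining equivalences I would argue at the level of $k$-th roots. If (b) holds and $N \in \fM$, the class identity provides $S \in \fS$ with $S_k^{1/k} \lesssim N_k^{1/k}$ and then $M \in \fM$ with $M_k^{1/k} \lesssim S_k^{1/k}$; strong log-convexity of $S$ forces $s_j^{1/j} \le s_k^{1/k}$ for $j \le k$ (here $s_k = S_k/k!$), and chaining the three estimates yields (d). The equivalence (c)$\Leftrightarrow$(d) is the stability-under-composition characterization for Beurling matrix classes, which I would cite from \cite{RainerSchindl12,RainerSchindl14}. The implication (b)$\Rightarrow$(a), which is the \emph{quotient}-level statement rather than merely (d), and, under the extra hypothesis \eqref{mgB}, the closing implication (d)$\Rightarrow$(a), are the projective analogues of the results of \cite{RainerSchindl16a}; here the standing hypothesis that for every $N$ there is $M$ with $(M_{k+1}/N_k)^{1/k}$ bounded is what allows one to pass from the class identity back to the sequence comparisons in the correct orientation.

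The main obstacle I anticipate is exactly this passage between root-level and quotient-level comparisons. The identity $\cB^{(\fM)} = \cB^{(\fS)}$ delivers only estimates on the $M_k^{1/k}$, which suffice for the geometric-mean condition (d) but not for the quotient condition (a)/(b); recovering the stronger quotient control is where strong log-convexity of the $S \in \fS$ and the standing boundedness of $(M_{k+1}/N_k)^{1/k}$ must be combined, and where \eqref{mgB} becomes indispensable for the final implication. A secondary point requiring care is the orientation of the quantifiers: every domination used above must follow the $\exists/\forall$ pattern dictated by the projective limit, which is reversed relative to the Roumieu corollary, so I would check each inclusion against the Beurling comparison criterion rather than transcribing the Roumieu argument verbatim.
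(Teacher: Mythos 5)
Your proposal is correct and follows essentially the same route as the paper, which proves both corollaries by combining \Cref{lem:good} (for the passage between \eqref{eq:goodB} and a strongly log-convex matrix) with the cited results of \cite{RainerSchindl12}, \cite{RainerSchindl14}, and \cite{RainerSchindl16a}; your identification of where the standing hypothesis on $(M_{k+1}/N_k)^{1/k}$ and condition \eqref{mgB} are needed to pass from root-level to quotient-level comparisons matches the intended argument.
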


In general, (c) $\not\Rightarrow$ (b) in neither of the corollaries which follows from \cite[Example 3.6]{RainerSchindl12}. 
Note that if $M=N$ then \eqref{mgR} and \eqref{mgB} reduce to a condition which is usually called \emph{moderate growth}  
or $M$.

For weight functions $\om$ we get a full characterization.

\begin{theorem} \label{thm:omegachar}
Let $\om$ be a weight function satisfying $\om(t) = o(t)$ as $t \to \infty$. Then the following are equivalent.
\begin{enumerate}
	\item[(a)] $\om$ is equivalent to a concave weight function. 
	\item[(b)] $\exists C>0 \E t_0 >0 \A \la \ge 1 \A t \ge t_0 : \om(\la t)\le C \la \, \om(t)$.
	\item[(c)] $\cB^{\{\om\}}$ is stable under composition.
  \item[(d)] $\cB^{(\om)}$ is stable under composition.
  \item[(e)] There is a weight matrix $\fS$ consisting of strongly log-convex weight sequences such that
  $\cB^{\{\om\}} = \cB^{\{\fS\}}$.
  \item[(f)] There is a weight matrix $\fS$ consisting of strongly log-convex weight sequences such that
  $\cB^{(\om)} = \cB^{(\fS)}$.
  \item[(g)] $\om$ is R-good.
  \item[(h)] $\om$ is B-good.
\end{enumerate}	
\end{theorem}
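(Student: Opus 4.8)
The plan is to use the real-variable equivalence \textup{(a)} $\Leftrightarrow$ \textup{(b)} as a hub and route every other condition back to it, treating the Roumieu triple \{(c),(e),(g)\} and the Beurling triple \{(d),(f),(h)\} symmetrically. Concretely, I would establish the web
\[
\textup{(a)} \Leftrightarrow \textup{(b)},\quad
\textup{(a)} \Rightarrow \textup{(e)} \Leftrightarrow \textup{(g)} \Rightarrow \textup{(c)} \Rightarrow \textup{(b)},\quad
\textup{(a)} \Rightarrow \textup{(f)} \Leftrightarrow \textup{(h)} \Rightarrow \textup{(d)} \Rightarrow \textup{(b)},
\]
which visibly renders all eight conditions equivalent.

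For \textup{(a)} $\Leftrightarrow$ \textup{(b)} I would argue purely in the real variable. Writing $\om \asymp \tau$ with $\tau$ concave, the elementary inequality $\tau(\la t) \le \la\,\tau(t)$ for $\la \ge 1$ gives $\om(\la t) \lesssim \la\,\om(t)$ for large $t$, which is \textup{(b)}. Conversely, \textup{(b)} says precisely that $t \mapsto \om(t)/t$ is almost decreasing for $t \ge t_0$; setting $\ka_0(t) := t\,\sup_{s \ge t} \om(s)/s$ one gets $\om \le \ka_0 \le C\om$ on $[t_0,\infty)$ together with $\ka_0(t)/t$ non-increasing, and an increasing function whose quotient by $t$ is non-increasing is comparable to its least concave majorant; this yields \textup{(a)}. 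I deliberately avoid the integral \eqref{kappa} here, which need not converge under these hypotheses since \textup{(a)} does not force non-quasianalyticity.

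The passage from \textup{(a)} to the matrix descriptions is immediate from \Cref{prop:strongmatrix} with $\si = \om$: the associated $\ul\fS$ consists of strongly log-convex sequences and satisfies $\cB^{\{\om\}} = \cB^{\{\ul\fS\}}$ and $\cB^{(\om)} = \cB^{(\ul\fS)}$, giving \textup{(e)} and \textup{(f)}. The equivalences \textup{(e)} $\Leftrightarrow$ \textup{(g)} and \textup{(f)} $\Leftrightarrow$ \textup{(h)} are then formal: a strongly log-convex $S$ has $\mu_k/k = s_k/s_{k-1}$ increasing, so it satisfies \eqref{eq:goodR} and \eqref{eq:goodB} with $N=M$ and $C=1$, whence \textup{(e)} $\Rightarrow$ \textup{(g)} and \textup{(f)} $\Rightarrow$ \textup{(h)}; the reverse implications are exactly the consequence of \Cref{lem:good} recorded before its statement, namely that any matrix obeying \eqref{eq:goodR} (resp.\ \eqref{eq:goodB}) can be replaced, class-preservingly, by one of strongly log-convex sequences. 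Finally \textup{(e)} $\Rightarrow$ \textup{(c)} and \textup{(f)} $\Rightarrow$ \textup{(d)} are the implication (b) $\Rightarrow$ (c) of the two corollaries following \Cref{lem:good}, whose standing hypothesis on $\fM$ is met by the matrices at hand.

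The crux, and the step I expect to be the main obstacle, is closing the loop via \textup{(c)} $\Rightarrow$ \textup{(b)} and \textup{(d)} $\Rightarrow$ \textup{(b)}. Here I would invoke the equivalence (c) $\Leftrightarrow$ (d) of those corollaries to convert stability under composition of $\cB^{\{\om\}} = \cB^{\{\fW\}}$ (resp.\ $\cB^{(\om)}$) into the root condition $m_j^{1/j} \lesssim n_k^{1/k}$ for $j \le k$ on the associated weight matrix $\fW$, and then translate this back into a growth condition on $\om$. The technical heart is precisely this dictionary between the powers $(W^\xi_k)^{1/k}$ and the weight $\om$: the almost-monotonicity in $k$ of $(W^\xi_k)^{1/k}$ encodes the almost-decrease of $\om(t)/t$, i.e.\ condition \textup{(b)}. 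Carrying this translation out rigorously is where the real work lies; everything else is either a short real-variable computation or a citation of \Cref{prop:strongmatrix}, \Cref{lem:good}, and the corollaries. Note that the shortcut of using the full equivalence in those corollaries is unavailable, since the moderate-growth hypothesis \eqref{mgR} is not at our disposal.
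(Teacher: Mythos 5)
Your architecture coincides with the paper's: the paper also proves (a) $\Rightarrow$ (e),(f) via \Cref{prop:strongmatrix}, gets (e) $\Leftrightarrow$ (g) and (f) $\Leftrightarrow$ (h) from \Cref{lem:good} (your observation that a strongly log-convex $S$ satisfies \eqref{eq:goodR} and \eqref{eq:goodB} with $N=M$, $C=1$ because $\mu_k/k = s_k/s_{k-1}$ is increasing is exactly the right complement to the lemma), takes (e) $\Rightarrow$ (c) and (f) $\Rightarrow$ (d) as known from \cite{RainerSchindl12}, and closes the loop through the equivalence of (a)--(d). Your direct treatment of (a) $\Leftrightarrow$ (b) via the auxiliary function $\ka_0(t)=t\sup_{s\ge t}\om(s)/s$ and the least concave majorant is sound and is essentially \cite[Lemma 1]{Peetre70}, on which the cited result rests; your remark that the integral \eqref{kappa} is unavailable here because (a) does not force non-quasianalyticity is a correct and relevant precaution.

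The gap is the one you flag yourself: the implications (c) $\Rightarrow$ (b) and (d) $\Rightarrow$ (b) are only described, not proved, and they are the genuinely non-trivial content of the theorem --- without them none of (c)--(h) is tied back to concavity and the eight conditions do not collapse. The paper disposes of this by citing \cite{RainerSchindl14} (based on \cite{Peetre70} and \cite{FernandezGalbis06}) for the known equivalence of (a)--(d); the ``dictionary'' you postpone, namely that almost-monotonicity of $j\mapsto (w^\xi_j)^{1/j}$ across the matrix $\fW$ encodes $\om(\la t)\lesssim\la\,\om(t)$ via $W^\xi_k=\exp(\tfrac1\xi\vh^*(\xi k))$ and Young-conjugate duality, is precisely what that reference carries out. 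Either cite it or supply the computation; as written the loop is open. A minor inaccuracy: the moderate-growth conditions \eqref{mgR} and \eqref{mgB} \emph{are} available for the weight matrix associated with a weight function (this is a standard property of such matrices, cf.\ \cite[Lemma 2.4]{RainerSchindl17}), so the ``full equivalence'' in the two corollaries is not actually barred to you --- though invoking it still would not by itself produce the bridge from the matrix condition back to the growth condition (b) on $\om$.
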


Notice that the conditions in the theorem are furthermore equivalent to the classes $\cB^{\{\om\}}$ and $\cB^{(\om)}$ to be 
stable under inverse/implicit functions and solving ODEs,
and, in terms of the associated weight matrix $\fW = \{W^\xi\}_{\xi>0}$, to 
\[
  \A \xi>0 \E \et>0 : (w^\xi_j)^{1/j} \le C\, (w^\et_k)^{1/k} \quad \text{ for } j \le k,
\]
as well as
\[
  \A \et>0 \E \xi>0 : (w^\xi_j)^{1/j} \le C\, (w^\et_k)^{1/k} \quad \text{ for } j \le k,
\]
see \cite{RainerSchindl14}. In the forthcoming paper \cite{FurdosNenningRainer} we shall see that they are also equivalent to 
the property that $\cB^{\{\om\}}$, resp.\ $\cB^{(\om)}$, can be described by almost analytic extensions; see also \cite{PetzscheVogt84}.

\begin{proof}
	The equivalence of the first four conditions \thetag{a}--\thetag{d} is well-known, see e.g.\ \cite{RainerSchindl14}, 
  which is based on \cite[Lemma 1]{Peetre70} and \cite{FernandezGalbis06}.
  That (a) implies (e) and (f) follows from \Cref{prop:strongmatrix}.
	(e) $\Rightarrow$ (c) and (f) $\Rightarrow$ (d) are clear; cf.\ \cite{RainerSchindl12}.
  The equivalences (e) $\Leftrightarrow$ (g) and (f) $\Leftrightarrow$ (h) follow from \Cref{lem:good}.
\end{proof}

\appendix

\section{Weight matrices}

By a \emph{weight matrix} we mean a   
family $\fM$ of weight sequences $M \ge (k!)_k$ which is totally ordered with respect to the pointwise order relation on sequences, i.e., 
\begin{itemize}
	\item $\fM \subseteq \R^\N$,
	\item each $M \in \fM$ is a weight sequence, which means that $M_0 = 1$, $M_k^{1/k} \to \infty$, and $M$ is log-convex,
	\item each $M \in \fM$ satisfies $k! \le M_k$ for all $k$,
	\item for all $M,N \in \fM$ we have $M \le N$ or $M \ge N$. 
\end{itemize} 
For a weight matrix $\fM$ and an open $U \subseteq \R^n$  
we consider the Roumieu class
\begin{align*} 
\cB^{\{\fM\}}(U) &:= \on{ind}_{M \in \fM} \cB^{\{M\}}(U),
\end{align*}
and the Beurling class
\begin{align*} 
	\cB^{(\fM)}(U) &:= \on{proj}_{M \in \fM} \cB^{(M)}(U).
\end{align*} 
For weight matrices $\fM$, $\fN$ we have  
(cf.\ \cite{RainerSchindl12}) 
\begin{align*}
\cB^{\{\fM\}} \subseteq \cB^{\{\fN\}} \quad &\Leftrightarrow  \quad  \A M\in \fM \E N \in \fN : (M_k/N_k)^{1/k} \text{ is bounded},
\\
\cB^{(\fM)} \subseteq \cB^{(\fN)} \quad &\Leftrightarrow  \quad  \A N\in \fN \E M \in \fM : (M_k/N_k)^{1/k} \text{ is bounded}.
\end{align*} 
Analogous equivalences hold for the \emph{local} classes  
\[
	\cE^{\{\fM\}}(U) := \on{proj}_{V \Subset U} \cB^{\{\fM\}}(V) \quad \text{ and } \quad \cE^{(\fM)}(U) := \on{proj}_{V \Subset U} \cB^{(\fM)}(V).    
\]


\def\cprime{$'$}
\providecommand{\bysame}{\leavevmode\hbox to3em{\hrulefill}\thinspace}
\providecommand{\MR}{\relax\ifhmode\unskip\space\fi MR }
\providecommand{\MRhref}[2]{%
  \href{http://www.ams.org/mathscinet-getitem?mr=#1}{#2}
}
\providecommand{\href}[2]{#2}

\end{document}